\documentclass[12pt]{article}
\usepackage[utf8]{inputenc}
\usepackage[dvips]{graphicx}
\usepackage{amssymb,color}
\usepackage{amsmath,amsthm}
\usepackage{epic}
\usepackage{tikz}
\usetikzlibrary{arrows.meta,positioning,calc,shapes.geometric,shadings,decorations.pathreplacing}
\usepackage{url}
\usepackage{graphicx}
\usepackage{soul}
\usepackage{cancel}
\usepackage[normalem]{ulem}
\usepackage{xcolor}

\usepackage{amsthm,amssymb,amsmath,amscd,graphicx, epsfig,hhline,enumerate}
\newtheorem{theorem}{Theorem}[section]
\newtheorem{lemma}[theorem]{Lemma}

\newtheorem{remark}[theorem]{Remark}
\newtheorem{defn}[theorem]{Definition}

\begin{document}

\title{Upper chromatic number of generalized quadrangles.}

\author{ 
Gabriela Araujo-Pardo$^1$\thanks{%
~garaujo@im.unam.mx}, 
Juli\'an Fres\'an$^2$\thanks{%
~julibeto@gmail.com},
\\
Linda Lesniak$^3$\thanks{%
~linda.lesniak@wmich.edu} 
and 
Mika Olsen$^2$\thanks{%
~olsen@cua.uam.mx\\ 
$^1${\small Instituto de Matem\'aticas, }
{\small Universidad Nacional Auton\'oma de M\'exico, }
{\small Campus Juriquilla, Quer\'etaro. }\\
$^2${\small Departamento de Matemáticas Aplicadas y Sistemas, }
{\small Universidad Aut\'onoma Metropolitana - Unidad Cuajimalpa, }
{\small Ciudad de M\'exico.}\\
$^3${\small Department of Mathematics, }
{\small Western Michigan University, }
{\small Kalamazoo, Michigan.}\\
{\small Research supported in part by PAPIIT-UNAM-M{\' e}xico IN113324, and  SECIHTI-M{\'e}xico under Project CBF2023-2024-552 and Project CBF-2025-G-1435.}}}
\maketitle

\begin{abstract}
We can regard a generalized quadrangle as a hypergraph in which points and lines are identified with vertices and hyperedges, respectively. A vertex coloring is said to be rainbow on a hyperedge if all vertices contained in that hyperedge are assigned pairwise distinct colors; if no hyperedge is rainbow, the coloring is termed rainbow-free. For a given hypergraph 
$H$, the maximum integer $k$ for which there exists a rainbow-free 
$k$-coloring is called the upper chromatic number of $H$, and is denoted by $\overline{\chi}(H)$.

In this work, we establish both lower and upper bounds for the upper chromatic number of certain generalized quadrangles. Moreover, we determine the exact values $\overline{\chi}(\mathcal{Q}(4,2))=6$ and $\overline{\chi}(\mathcal{Q}(4,3))=21$. 
\end{abstract}

{\bf{Key words:}} Generalized quadrangles, rainbow-free colorings, upper chromatic number. 


\section{Introduction}
The notion of a rainbow-free coloring \cite{jnr} coincides with the notion of a proper strict coloring of a $\mathcal{C}$-hypergraph in the context of the recent theory of coloring mixed hypergraphs \cite{v}. Let $H$ be a hypergraph and $k$ be a positive integer. A \emph{$k$-coloring} of $H$ is a surjective mapping from $V(H)$ to a set of $k$ colors; that is $c:V(H)\to \{0,1,...,k-1\}$. The inverse images
of the colors are called the \emph{chromatic classes}; that is $C_i=\{v \in
V(H): c(v)=i\}$ for $i\in\{0,...,k-1\}$.  Given a  
$k$-coloring of $H$, an edge of $H$ is called \emph{rainbow},  if it is
totally multicolored (i.e., the coloring assigns pairwise distinct colors to its
vertices). A $k$-coloring of $H$ is said to be \emph{rainbow-free}, if it
contains no rainbow edges. The  \emph{upper chromatic number} of $H$, denoted
by $\overline{\chi}(H)$, is the largest integer $k$ for which there is a
rainbow-free $k$-coloring of $H$. We observe that, if  $\overline{\chi}(H)= k$ then $k+1$ is
the minimum integer with the property that every $(k+1)$-coloring
of $H$ contains a rainbow edge. This parameter is called the \emph{heterochromatic number}, and it was introduced by Arocha, Bracho, and Neumman-Lara in 1992 (see \cite{abn}). In this sense, the problem of determining the
upper chromatic number is considered 
an extremal anti-Ramsey problem.

In this work, we study hypergraphs associated with finite geometries. In this context, the sets of vertices and hyperedges of the hypergraph are the points and the lines of the finite geometry, respectively. We study the upper chromatic number for generalized quadrangles; the direct antecedent of this work is the study of the upper chromatic number of projective planes. There exists a lot of work related to these finite geometries; in particular, in 2003 Araujo-Pardo \cite{daisyPP} provided a lower bound for projective planes of order a power of two, using some sets called \emph{hyperovals} or \emph{$2$-sets}. These are sets of vertices that intersect any hyperedge in at least two vertices, and the {\emph{$2$-blocking set number}}, denoted by $\mathcal{X}_2$, is the minimum cardinality over all $2$-blocking sets of the hypergraph. Later, G. Bacs\'{o} and Z. Tuza, in 2007 \cite{bt}, found general bounds using also the notion of $2$-blocking sets.
It is not difficult to see that:
\begin{equation}\label{eq:natural} \overline{\chi}(H)\geq |V(H)|-\mathcal{X}_2(H)+1\end{equation} 
holds true for every hypergraph $H$, and it is an interesting problem to determine when this inequality is tight. G. Bacs\'{o}, T. H\'{e}ger, T. Sz\H{o}nyi, in 2013 \cite{bhs}, found exact values for projective planes of order $q^2$ for $q$ a prime power using the existence of Baer subplanes.

The upper chromatic number has been studied in many different contexts and has
been redefined several times under different names (see
\cite{abn,am,af,ess,fmo23,jks,k,ms,mnr} and references therein). More specifically, results
in projective planes appear, for example, in \cite{daisyPP,bhs,bt}.

It is important to say that we {consider} the generalized quadrangles, denoted by $\mathcal{Q}(4,q)$, and we use the geometrical terminology. We talk about points $\mathcal{P}$ and lines $\mathcal{L}$ of $\mathcal{Q}(4,q)$, not vertices and hyperedges. 
We say that a subset $\mathcal{X}\in \mathcal{P}$  is a {\emph{$2$-blocking set}} if $\mathcal{X}$ has at least two points in each line of $\mathcal{Q}(4,q)$, moreover we say that $\mathcal{X}$ {\emph{blocks the lines $\mathcal{L}$} }, and we said that a {\emph{line $\ell\in \mathcal{L}$ is {blocked} by $\mathcal{X}$}} if $\ell$ has at least two points in $\mathcal{X}$. A {\emph{$2$-blocking set}} in $\mathcal{Q}(4,q)$ is a set of points that block all the lines of  $\mathcal{Q}(4,q)$. And the {\emph{$2$-blocking number}} is the minimum cardinality taken over all the $2$-blocking sets. Notice that we can talk about minimal $2$-blocking sets that are $2$-blocking sets that do not contain $2$-blocking sets as subsets. There exist {authors} that call the $2$-blocking sets $2$-transversals (see for instance \cite{AMMV17}).

With this geometrical point of view, we say that the upper chromatic number of a generalized quadrangle is the maximum number of colors in a coloring of the points of $\mathcal{Q}(4,q)$ without rainbow lines. This parameter is denoted by $\overline\chi(\mathcal{Q}(4,q))$.

{If $\mathcal{X}$ is a $2$-blocking set} in $\mathcal{Q}(4,q)$, we can color {the points of $\mathcal{X}$} with one color and the rest of the points with different colors, and we have that: 
\begin{equation}\label{eq:natural2} \overline\chi({\mathcal{Q}(4,q)})\geq |\mathcal{P}-\mathcal{X}|+1.\end{equation}\label{boundgc}
This lower bound for the upper chromatic number can be maximazed by the $2$-blocking set number, but imagine that we have a $2$-blocking set of bigger order than the $2$-blocking set number, but it can be {colored} with more colors maintaining no rainbow lines or, even better, that we have a $2$-blocking set of minimum order that can be colored with more than one color. Then, we improve the bound given in (\ref{boundgc}).

As we said previously, the antecedent of this problem is the study of this parameter in projective planes: 

Let $q$ be a prime power. A projective plane of order $q$ is a finite geometry, denoted by $\mathcal{PG}(2,q)$, that satisfies the following conditions: any pair of points induces one and only one line, and any two lines intersect in at most one point. Moreover, in this finite geometry, there always exists a set of four non-collinear points. 
It is straightforward to prove that any point has the same number of incident lines, and that any line has the same number of points, and both values coincide. In particular, if $q$ is the order of the projective plane, this value is equal to $q+1$, and the projective plane has $q^2+q+1$ points and $q^2+q+1$ lines. 

It is an easy exercise to verify that the Fano plane $\mathcal{PG}(2,2)$ satisfies $\overline{\chi}(\mathcal{PG}(2,2))=3$ while the projective plane of order $3$,  $\mathcal{PG}(2,3)$ satisfies 
$\overline{\chi}(\mathcal{PG}(2,3))=6$. For $q=p^h$, with  $p$ a large enough prime, it
has been proved that concerning $\mathcal{PG}(2,q)$ the equality holds in (\ref{eq:natural}), and it is attained only by trivial colorings  \cite{bhs}.

In this work, we take a step further in finite geometries. We investigate the upper chromatic number of generalized quadrangles. Notice that, as in the projective planes, the generalized quadrangles exist when $q$ is a prime power, and any point has $q+1$ incident lines and any line has $q+1$ points. Moreover, in this case, the number of points is equal to $q^3+q^2+q+1$ and also the number of lines. Both projective planes and generalized quadrangles belong to the class of finite geometries; they are part of the family of generalized polygons, as we will explain in Section \ref{sec:preliminaries}.

This paper is organized as follows: In Section \ref{sec:preliminaries}, we give definitions and results that will be used throughout the article, and in Section \ref{results} we state the main results that we will prove later in the rest of the sections. In Section \ref{lowerbounds} we prove the lower bounds of the upper chromatic number depending on the value of $q$. In Section \ref{upperbound}, we prove the general upper bound for any generalized quadrangle. In Section \ref{q2y3} we prove the exact value of the upper chromatic number for the generalized quadrangles of order $2$ and $3$. For $q=2$ we use a description known as the Sylvester model (see \cite{sylvester}). Then, we prove that $\overline{\chi}(\mathcal{Q}(4,2)) = 6$ and $\overline{\chi}(\mathcal{Q}(4,3)) = 21$. 
Finally, in Section \ref{sec.conclandfuture} we give conclusions and propose future work.

\section{Preliminaries}
\label{sec:preliminaries}

As we said in the introduction, projective planes and generalized quadrangles belong to the class of finite geometries; moreover, they are part of the family of generalized polygons. We will delve a bit deeper into this concept:
 
\begin{defn}[\cite{KS20}]\label{kiss}
Let ${\mathcal{G}}=({\mathcal{P}},{\mathcal{L}},I)$ be finite point-line incidence geometry and $n\geq 2$ be an integer.

A chain of length $h$ is a sequence of $h+1$ elements $x_0,x_1\ldots,x_h\in {\mathcal{P}}\cup {\mathcal{L}}$ such that
$$x_0 I x_1 I \ldots Ix_h$$
We said that the chain joins $x_0$ and $x_h$ and that ${\mathcal{S}}$ is connected if for any two elements of ${\mathcal{P}}\cup {\mathcal{L}}$ there exits a chain joining them. The distance of the two elements $x$ and $y$ of ${\mathcal{P}}\cup {\mathcal{L}}$, denoted by $d(x,y)$, is the length of the shortest chain joining them. 

Then, ${\mathcal{G}}$ is called a {\emph{generalized $n$-gon}} if it is connected and satisfies the following axioms: 

\begin{itemize}
\item $d(x,y)\leq n$ for all $x,y\in {\mathcal{P}}\cup {\mathcal{L}}$.
\item If $d(x,y)=k<n$, then there exists a unique chain of length $k$ joining $x$ and $y$. 
\item For all $x\in {\mathcal{P}}\cup {\mathcal{L}}$ there exists $y\in {\mathcal{P}}\cup {\mathcal{L}}$ such that $d(x,y)=n$. 
\end{itemize}

\end{defn}

\begin{defn}[\cite{KS20}]\label{incidencegraphdef}
The incidence graph (sometimes called {\emph{Levi Graph}}) of a point-line incidence geometry is a bipartite graph whose two classes of vertices correspond to the set of points and the set of lines of the geometry, and two vertices are adjacent if and only if the corresponding point-line is incident in the geometry. 
\end{defn}

The following results are given in \cite{VM98}: 

\begin{lemma}[\cite{VM98}]\label{VMLemma}
A geometry ${\mathcal G} = (\cal{P},\cal{L},{\bf I})$ is a generalized $n$-gon if and
only if the incidence graph of ${\mathcal G}$ is a connected graph of diameter $n$ and
girth $2n$ in which each vertex is incident with at least three edges.
\end{lemma}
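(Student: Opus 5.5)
Let $\Gamma$ denote the incidence graph of $\mathcal{G}$. Distance in Definition \ref{kiss} is exactly graph distance in $\Gamma$, since a chain $x_0 I x_1 I \cdots I x_h$ is a walk in $\Gamma$. So I will translate each axiom into a graph property and back.

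For the forward direction, assume $\mathcal{G}$ is a generalized $n$-gon. Connectivity of $\mathcal{G}$ is connectivity of $\Gamma$. The first and third axioms say every distance is at most $n$ and some distance equals $n$, so $\mathrm{diam}(\Gamma)=n$.

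For the girth, $\Gamma$ is bipartite, so every cycle has even length $2k$. Suppose a cycle of length $2k<2n$ exists, and take a shortest one, which is isometric in $\Gamma$. Two antipodal vertices $x,y$ on it satisfy $d(x,y)=k<n$. The two halves of the cycle are then two distinct chains of length $k$ joining $x$ and $y$, contradicting the second axiom. So the girth is at least $2n$.

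To get girth exactly $2n$, take $x,y$ with $d(x,y)=n$ and a geodesic $x=x_0,\dots,x_n=y$. Here I need the valency hypothesis; for the forward direction it must come from thickness. Definition \ref{kiss} as stated does not include thickness, so I will read it as implicitly assuming a thick geometry, i.e.\ every element is incident with at least three elements. Under that reading, $x_{n-1}$ has a neighbour $z\neq x_{n-2}$ with $z\neq y$, and by parity $d(x,z)\in\{n-2,n\}$.
\begin{itemize}
\item If $d(x,z)=n-2$, two geodesics to $x_{n-1}$ give a closed walk of length at most $2n$.
\item Otherwise, I consider $y$'s other neighbours instead.
\end{itemize}
The standard argument is this: since $x_{n-1}$ has degree at least $2$ and $y$ has a neighbour $w\neq x_{n-1}$, we get $d(x,w)=n-1$ because distances are bounded by $n$ and have fixed parity. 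This yields a second geodesic $x\to w\to y$, distinct from the first. Two distinct geodesics of length $n$ from $x$ to $y$ combine to give a cycle of length at most $2n$. Hence the girth equals $2n$.

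For the converse, assume $\Gamma$ is connected with diameter $n$, girth $2n$, and minimum degree at least $3$. Connectivity and $d\le n$ are immediate. For uniqueness, if $d(x,y)=k<n$ and two distinct chains of length $k$ joined $x$ and $y$, their union would contain a cycle of length at most $2k<2n$, which is impossible.

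The third axiom is the main obstacle: every vertex $x$ must have some vertex at distance $n$. Diameter $n$ only gives this for some pair. I would propagate it along edges. Suppose $d(x,y)=n$ and $x'\sim x$. Since $\Gamma$ is bipartite, $d(x',y)\in\{n-1,n+1\}$, and the diameter forces $n-1$. Let $y'$ be a neighbour of $y$ not on the (unique, by girth) geodesic from $x'$ to $y$; one exists because $\deg y\ge 3$. Then $d(x',y')=n$, because a shorter path would combine with the geodesic to form a cycle of length less than $2n$. Connectivity then gives the third axiom for every vertex.

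Finally, the degree condition gives thickness of $\mathcal{G}$ directly.
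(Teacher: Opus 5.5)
The paper gives no proof of this lemma; it is quoted from \cite{VM98}, so there is no argument in the paper to compare yours with. Your proposal is a correct, self-contained proof that works directly from Definition~\ref{kiss}, and all its steps check out:
\begin{itemize}
\item the first and third axioms give diameter $n$;
\item uniqueness of short chains, applied to antipodal vertices of a shortest (hence isometric) cycle, gives girth at least $2n$;
\item a second neighbour $w$ of a vertex $y$ at distance $n$ from $x$ gives two distinct $x$--$y$ geodesics, hence a cycle of length at most $2n$;
\item in the converse, girth $2n$ gives uniqueness, and pushing an opposite element across each edge (via the unique geodesic from $x'$ to $y$ and a neighbour $y'$ of $y$ off it), combined with connectivity, gives the third axiom.
\end{itemize}

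Two remarks on the write-up. First, reading Definition~\ref{kiss} as including thickness is forced, not a convenience. As literally stated, the forward implication is false. An ordinary $n$-gon (incidence graph $C_{2n}$) satisfies all three axioms but has vertices of valency $2$. For $n=4$, so does an $(s+1)\times(s+1)$ grid. The quoted form of the lemma (with ``at least three edges'') is the thick version from \cite{VM98}, where non-thick objects are called weak generalized polygons. So state this as a correction of the definition rather than as an interpretation.

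It is also worth recording that thickness is needed only for the valency clause. The second neighbour of $y$ exists from the axioms alone: if $y$ had a unique neighbour $u$, any $v$ with $d(u,v)=n$ would satisfy $d(v,y)=n+1$. Hence diameter $n$ and girth $2n$ hold for every geometry satisfying Definition~\ref{kiss}. Likewise, in the converse, valency at least $2$ already suffices for the third axiom.

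Second, delete the false start in the girth paragraph. The case $d(x,z)=n-2$ cannot occur at all, since it yields two chains of length $n-1<n$ from $x$ to $x_{n-1}$. The ``otherwise'' branch is left dangling, and the valency of $x_{n-1}$ plays no role. The argument through $w$ is complete on its own.
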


With Definition \ref{kiss}, a projective plane is a generalized $3$-gon and a generalized quadrangle is a generalized $4$-gon. Furthermore, the incidence graph of a generalized quadrangle $\mathcal{Q}(4,q)$ is a bipartite graph $(\mathcal{P},\mathcal{L})$, where $\mathcal{P}$ is the set of points and $\mathcal{L}$ is the set of lines. 

The incidence graph of a generalized quadrangle $\mathcal{Q}(4,q)$ is a $(q+1,8)$-Moore graph and it exists when $q$ is a prime power, has order  $2(q+1)(q^2+1)$, is $(q+1)$-regular, has diameter $4$ and girth $8$. 

Another structure that we will use in this paper is {\emph{ovoids}}, denoted by $\mathcal{O}$, which are maximal subsets of non-collinear points in $\mathcal{Q}(4,q)$. It is well known that the cardinality of $\mathcal{O}$ is equal to $q^2+1$. Moreover, in the incidence graph of $\mathcal{Q}(4,q)$, ovoids induce a maximal set of vertices mutually at distance four. 
For more information on Moore graphs or generalized quadrangles, see \cite{EJ13,KS20,LibroRojo,VM98}.

Given a 2-blocking set $B$ of $\mathcal{Q}(4,q)$, we say that a line is \emph{covered} by $B$ if it has exactly two neighbors in $B$, and if it has more than two neighbors in $B$  we say that it is \emph{extra-covered} by $B$. 

The following gives us an easy but useful proof that the minimum number of a $2$-blocking set is $2(q^2+1)$.

\begin{lemma} \label{blockingset} Let $B$ a blocking set of a generalized guadrangle $\mathcal{Q}(4,q)$, then $|B|\geq 2(q^2+1)$.
\end{lemma}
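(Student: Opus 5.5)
We prove the bound by double counting incidences between the points of $B$ and the lines of $\mathcal{Q}(4,q)$. By the preliminaries, $\mathcal{Q}(4,q)$ has exactly $(q+1)(q^2+1)=q^3+q^2+q+1$ lines, and every point lies on exactly $q+1$ lines. We count the set of pairs $(p,\ell)$ with $p\in B$, $\ell\in\mathcal{L}$ and $p$ incident with $\ell$ in two ways.

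Counting by points, each $p\in B$ lies on exactly $q+1$ lines, so the number of pairs is exactly $|B|(q+1)$.

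Counting by lines, each line $\ell$ is blocked by $B$, i.e. $|\ell\cap B|\geq 2$. Summing over all lines, the number of pairs is at least $2|\mathcal{L}|=2(q+1)(q^2+1)$.

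Comparing the two counts gives $|B|(q+1)\geq 2(q+1)(q^2+1)$. Dividing by $q+1$ yields $|B|\geq 2(q^2+1)$.

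There is no real obstacle here. The only ingredients are the regularity of the incidence graph, i.e. its $(q+1)$-regularity, and the line count $(q+1)(q^2+1)$, both recalled from the Moore-graph description in the preliminaries.

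The argument also shows when equality holds. We have $|B|=2(q^2+1)$ exactly when every line is covered and none is extra-covered, i.e. every line meets $B$ in exactly two points. For example, the union of two disjoint ovoids gives such a $B$, since each ovoid meets every line in exactly one point. This shows the bound is the $2$-blocking set number whenever two disjoint ovoids exist, which is the form in which the lemma is used later.
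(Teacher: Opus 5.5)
Your proof is correct and is essentially the paper's argument: the paper double counts the edges between $B$ and $\mathcal{L}$ in the incidence graph, obtaining $2(q^3+q^2+q+1)\le |B|(q+1)$, exactly as you do. Your equality analysis (every line meets $B$ in exactly two points) matches what the paper records later in Lemma~\ref{Lem:CotaB}, and your remark about unions of two disjoint ovoids is a harmless aside that the proof does not need.
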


\begin{proof} Let $G$ the incidence graph of $\mathcal{Q}(4,q)$  with two partite sets of vertices: the points $\mathcal{P}$ and the lines $\mathcal{L}$. As each line $\ell\in\mathcal{L}$ is covered or extra-covered by $B$, the sum of the degrees of the set $\mathcal{L}$ is at least $2(q^3+q^2+q+1)$, and since each vertex has degree $q+1$, the sum of the degrees of the set $B$ is $|B|(q+1)$. Thus $2(q^3+q^2+q+1)\le |E(G)|=|B|(q+1)$ and $|B|\ge 2(q^2+1)$.
\end{proof}

In \cite{2ovoid} the author cited two papers \cite{Cossi,D2000} that prove the existence of $2$-blocking sets, calling {\emph {$2$-ovoids}}, with the property that it intersects each line in exactly two points. The $2$-ovoids has size $2(q^2+1)$ and only exists if $q$ is even or $q=3$. 
Then, this family attains the lower bound, and the $2$-blocking set number is exactly $2(q^2+1)$.

For $q\not=3$ an odd prime power, we have a blocking set of size $2q^2+q$. In the following, we explain in detail the configuration given in \cite{fmo23} to obtain this $2$-blocking set. It is interesting to note that this configuration is a kind of generalization of a configuration obtained in \cite{daisyPP} for projective planes, which was used in that article to provide a lower bound on the upper chromatic number of projective planes. The authors of these two papers call these configurations the \emph{Daisy Structures}.

In \cite{fmo23} the authors defined a Daisy structure that partitions the points of a generalized quadrangle of order $q$. The petals are $q$ ovoids $O_0, O_{1}, O_{2}, \ldots, O_{q-1}$ that intersects in exacly one point, the center,  and the $q+1$ stems are lines $T^*_\rho, T^*_0, T^*_1, \ldots, T^*_{q-1}$ that intersects in the center, see Figure \ref{fig:daisy}. 

The following result uses this Daisy structure. Let $c$ be the center of the Daisy structure, let $O'_k=O_k-c$ for $1\le k\le q-1$ and let $T_k=T^*_k-c$ for $k\in\{\rho, 0, \ldots ,{q-1}\}$.  

\begin{theorem}\cite{fmo23}\label{Teo:Daisy2}
Let $G$ be a $(q+1,8)$-Moore graph (obtained from a classic quadrangle) with bipartition $(\mathcal{P}, \mathcal{L})$. The sets $O_k$ are ovoids of $G$. The set $\mathcal{D}= \{ O_0, O^{'}_{1}, O^{'}_{2}, \ldots, O^{'}_{q-1}, T_\rho, T_0, T_1, \ldots , T_{q-1}   \}$ is a partition of $\mathcal{P}$.
\end{theorem}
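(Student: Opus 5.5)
Since the statement presupposes a specific construction of the ovoids $O_k$ from the classical model, I will first fix that model and the construction, and then establish two facts: each $O_k$ is an ovoid, and the listed sets partition $\mathcal{P}$.

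\textbf{The model and the daisy.} Work in $\mathcal{Q}(4,q)$, the points of the parabolic quadric $x_0x_1+x_2x_3=x_4^2$ in $PG(4,q)$. Fix the center $c$ and let $T^*_\rho,T^*_0,\ldots,T^*_{q-1}$ be the $q+1$ lines through $c$. Their union is $c^\perp\cap\mathcal{Q}(4,q)$, the cone over a conic; it contains $1+(q+1)q$ points.

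\textbf{Each $O_k$ is an ovoid.} I would realise the $q$ petals as elliptic quadric sections. Take a pencil of hyperplanes $\Pi_k$ through a plane $\pi$ with $c\in\pi$, chosen so that each section $\Pi_k\cap\mathcal{Q}(4,q)$ is an elliptic quadric $Q^-(3,q)$ containing $c$. Such a section has $q^2+1$ pairwise non-collinear points, since an elliptic quadric in $PG(3,q)$ contains no lines. Hence each $O_k$ is an ovoid, giving $|O_k|=q^2+1$.

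\textbf{The partition.} Two things must be checked.
\begin{enumerate}
\item The petals $O_k$ meet pairwise exactly in $c$. Two hyperplanes of the pencil meet in $\pi$, so it suffices that $\pi\cap\mathcal{Q}(4,q)=\{c\}$; I would choose $\pi$ to be a tangent plane meeting the quadric only at $c$.
\item No point of $O'_k$ lies on a stem $T_j$. Every point of a stem is collinear with $c$, while $O_k$ is a partial ovoid containing $c$, so $O_k$ contains no other point collinear with $c$. Hence $O_k\cap c^\perp=\{c\}$.
\end{enumerate}

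The daisy is then a disjoint union whose size is
\[
(q^2+1)+(q-1)q^2+(q+1)q=q^3+q^2+q+1=|\mathcal{P}|.
\]
So disjointness alone already forces it to cover $\mathcal{P}$.

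\textbf{Main obstacle.} The difficult step is choosing $\pi$ so that every hyperplane of the pencil through $\pi$, other than $c^\perp$, is elliptic; the pencil has $q+1$ members, one of which is $c^\perp$, leaving exactly $q$. I would choose coordinates with $c=(1,0,0,0,0)$ and $c^\perp: x_1=0$. I would then take $\pi:x_1=0,\ x_4=0$, adjusted as needed, and verify by discriminant computations that the restricted forms are anisotropic on the complement of $c$. For odd $q$ this is a nonsquare condition and for even $q$ a trace condition. This choice of field elements, rather than the counting, is the technical core.
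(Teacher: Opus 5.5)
The paper gives no argument for this statement; it imports the daisy from \cite{fmo23}, so your proposal has to stand on its own. Much of it does. The disjointness checks are correct, as is the observation that a hyperplane section containing no line of the quadric consists of pairwise non-collinear points. The final count is also right. The gap is exactly the step you call the main obstacle. You never exhibit a plane $\pi$ with $\pi\cap\mathcal{Q}(4,q)=\{c\}$, nor show that the $q$ hyperplanes through it other than $c^\perp$ give ovoids. The one concrete candidate you write down fails. On $\pi: x_1=x_4=0$ the form reduces to $x_2x_3$, so $\pi$ meets the quadric in two lines through $c$. Every other member $x_4=\mu x_1$ of that pencil then carries $x_1(x_0-\mu^2x_1)+x_2x_3$, which is a hyperbolic quadric. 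So the existence of the petals, which is the real content of the statement, is not yet proved.

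The step is easy to supply, and no per-hyperplane discriminant check is needed.
\begin{enumerate}
\item \emph{Choosing $\pi$.} Planes through $c$ inside $c^\perp$ correspond to lines of the quotient plane $c^\perp/\langle c\rangle\cong PG(2,q)$. There $c^\perp\cap\mathcal{Q}(4,q)$ becomes the conic $x_2x_3=x_4^2$. Take $\pi$ corresponding to an external line, which exists for every $q$: for $q$ odd, $x_1=0,\ x_3=\nu x_2$ with $\nu$ a nonsquare; for $q$ even, $x_1=0,\ x_3=\alpha x_2+x_4$ with $\mathrm{Tr}(\alpha)=1$.
\item \emph{Each section is a partial ovoid.} Let $H\neq c^\perp$ be a hyperplane through $\pi$, so $H\cap c^\perp=\pi$. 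A quadric line inside $H$ would meet $\pi$ (both lie in the $3$-space $H$) in a point of $\pi\cap\mathcal{Q}(4,q)=\{c\}$. A quadric line through $c$ lies in $c^\perp$, hence in $\pi$, which is impossible. So each $H\cap\mathcal{Q}(4,q)$ is a partial ovoid with at most $q^2+1$ points.
\item \emph{Each section is an ovoid.} The set $c^\perp\cap\mathcal{Q}(4,q)$ is the union of the stems, so there are exactly $q^3$ quadric points off $c^\perp$. Each such point $x$ lies in exactly one hyperplane through $\pi$, namely $\langle \pi,x\rangle\neq c^\perp$. Each section contributes $|H\cap\mathcal{Q}(4,q)|-1\le q^2$ of these points, and there are $q$ sections. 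Hence every section has exactly $q^2+1$ points and is an ovoid (necessarily elliptic).
\end{enumerate}
With this inserted your proof is complete. The only field-theoretic input is the existence of an external line to a conic, a condition on the single plane $\pi$ rather than on each hyperplane.
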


Moreover, the description given in \cite{fmo23} proves that any line of the generalized quadrangle is a stem or intersects all the petals in one and only one point and has one point in one stem. With this description, we can see that if $S$ is the set of points of $\bigcup_{i=0}^{q-1} T_i \cup T_\rho$, then the set of points $B=\mathcal{O}^{'}_{1} \cup{S}$ is a $2$-blocking set of size $q^2+q(q+1)=2q^2+q$ (see the shaded part in Figure \ref{fig:daisy}). 

\begin{figure}
    \centering
\begin{tikzpicture}[scale=.6,
    dot/.style={circle, fill=black, inner sep=0pt, minimum size=0.15cm},
    cnode/.style={circle, draw, fill=white, inner sep=0pt, minimum size=0.20cm}
]
    \draw[decorate,
decoration={brace,amplitude=8pt}]
(-3.2,-.6)--(-3.3,3.1);

\node[align=center]
at (-5.3,1.2)
{$q$ ovoids\\ (petals)};

\draw[decorate,
decoration={brace,mirror,amplitude=8pt}]
(-2,-3.2)--(2,-3.2);

\node at (0,-4.2)
{$q+1$ lines (stems)};
    \node[
draw,
rounded corners,
fill=white,
align=center,
inner sep=4pt,
] at (4.75,-2)
{2-blocking set\\ set $B$};

    \draw (120:1.4) ellipse [x radius=0.4, y radius=1.65, rotate=30];
    \node at (60:2.4) {\Large$\cdots$};
    \draw (150:1.4) ellipse [x radius=0.4, y radius=1.65, rotate=60];
    \draw (30:1.4) ellipse [x radius=0.4, y radius=1.65, rotate=-60];
    \draw (90:1.4) ellipse [x radius=0.4, y radius=1.75];
    \draw (0:1.4) ellipse [x radius=0.4, y radius=1.65, rotate=-90];
    \draw (180:1.4) ellipse [x radius=0.4, y radius=1.65, rotate=90];
    
    \node[dot] (A) at (0,0) {};
    
    \node[dot] (B1) at (240:1) {};
    \node[dot] (B2) at (240:1.5) {};
    \node[cnode] (B3) at (240:2.1) {\tiny$\cdots$};
    \node[dot] (B4) at (240:2.7) {};
    
    \node[dot] (C1) at (260:1) {};
    \node[dot] (C2) at (260:1.5) {};
    \node[cnode] (C3) at (260:2.1) {\tiny$\cdots$};
    \node[dot] (C4) at (260:2.7) {};
    
    \node[dot] (D1) at (300:1) {};
    \node[dot] (D2) at (300:1.5) {};
    \node[cnode] (D3) at (300:2.1) {\tiny$\cdots$};
    \node[dot] (D4) at (300:2.7) {};

    \draw (A) -- (B1) -- (B2) -- (B3) -- (B4);
    \draw (A) -- (C1) -- (C2) -- (C3) -- (C4);
    \draw (A) -- (D1) -- (D2) -- (D3) -- (D4);

    \node at (280:2.3) {$\cdots$};

    \begin{scope}
\fill[gray,opacity=.4]
  (30:.5)
  .. controls (55:.9) and (15:3) .. (0:3.3)
  .. controls (345:3) and (300:.9) .. (330:.7)
  .. controls (310:.9) and (320:3) .. (300:3.1) 
  .. controls (280:3.2) and (260:3.2) .. (240:3.1)
  .. controls (223:3) and (228:.9) .. (218:.6)
  .. controls (270:.75) and (300:.6) .. (0:.5)
      -- cycle;
\end{scope}
\end{tikzpicture}
    \caption{Daisy structure of $\mathcal{Q}(4,q)$}
    \label{fig:daisy}
\end{figure}

In Section \ref{lowerbounds} we give lower bounds for the upper chromatic number of $\mathcal{Q}(4,q)$ that depend on the size of the $ 2$-blocking sets.


\section{Main Results}\label{results}

In this section, we state our main results: the lower bounds depending on the existence of the $2$-blocking sets that we will use to find rainbow-free colorings for the generalized quadrangles of order $q$, and the upper bound is the same for all the cases. We also give two exact values for $q=\{2,3\}$ that we will prove with different techniques in Section \ref{q2y3}. 

\begin{theorem}\label{mainthm}
Let $q$ be a prime power. Then we have that: 
\begin{enumerate}

\item If $q\geq 5$ and odd prime power, we have that:
$$q^3-q^2+2\le \overline{\chi}(\mathcal{Q}(4,q))\le q^3-q^2+2q-3. $$

\item If $q$ is an even prime power or $q=3$, we have that: 
$$q^3-q^2+q\le\overline{\chi}(\mathcal{Q}(4,q))\le  q^3-q^2+2q-3.$$

\item If $q=2$, then $\chi(\mathcal{Q}(4,2))=6$.

\item If $q=3$, then $\chi(\mathcal{Q}(4,3))=21$.
\end{enumerate}
\end{theorem}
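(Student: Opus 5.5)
The plan is to prove the lower bounds by explicit colorings built from the $2$-blocking sets of Section~\ref{sec:preliminaries}, the general upper bound by a counting argument on the colour classes, and then to treat $q=2,3$ separately. One caution first: for $q=2$ the expression $q^3-q^2+2q-3$ equals $5$, which is below the claimed value $6$. So the upper bound in item~2 can only hold for $q\ge 3$, and $q=2$ needs its own argument.

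\textbf{Lower bounds.} Both lower bounds come from (\ref{eq:natural2}). Colour all points of a $2$-blocking set $\mathcal{X}$ with one colour and give every other point its own colour. Every line then has two points of the common colour, so no line is rainbow.
\begin{itemize}
\item For odd $q\ge 5$, take $\mathcal{X}=B=O'_1\cup S$ from the Daisy structure (Theorem~\ref{Teo:Daisy2}), with $|B|=2q^2+q$. Since $|\mathcal{P}|=q^3+q^2+q+1$, this gives $q^3+q^2+q+1-(2q^2+q)+1=q^3-q^2+2$ colours.
\item For $q$ even or $q=3$, take a $2$-ovoid of size $2(q^2+1)$. This gives $q^3+q^2+q+1-2q^2-2+1=q^3-q^2+q$ colours.
\end{itemize}

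\textbf{Upper bound.} Fix a rainbow-free $k$-colouring with classes $C_1,\dots,C_k$, and put $t_i=|C_i|-1$ and $T=\sum_i t_i$, so that $k=|\mathcal{P}|-T$. Every line contains two points of some common class. Hence the lines can be distributed among the classes: each line is charged to a class meeting it in at least two points.

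Let $s=|C_i|$. The number of lines charged to $C_i$ has two bounds.
\begin{itemize}
\item It is at most $s(q+1)/2$, since each such line uses at least two of the $s(q+1)$ incidences of the class.
\item It is at most $s^2/4$. Pick one collinear pair of points of $C_i$ on each such line. Two collinear pairs sharing a point lie on distinct lines, so a triangle among the chosen pairs would give three pairwise collinear points not on one line. That would be a triangle in the generalized quadrangle, which is excluded by girth $8$ (Lemma~\ref{VMLemma}). So the chosen pairs form a triangle-free graph, and Mantel's theorem gives the bound.
\end{itemize}
The ratio (lines covered)$/t_i$ is therefore at most $\max_s \min\{s^2/4,\,s(q+1)/2\}/(s-1)$. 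This maximum is attained at $s=2q+2$, where it equals $(q+1)^2/(2q+1)$.

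Summing over the classes gives
\[
T\ge \frac{(q+1)(q^2+1)(2q+1)}{(q+1)^2}=2q^2-q+3-\frac{2}{q+1},
\]
so $T\ge 2q^2-q+3$ and $k\le q^3-q^2+2q-2$.

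\textbf{Main obstacle.} Gaining the last unit, to reach $q^3-q^2+2q-3$, is the step I expect to be hardest. The plan is an equality analysis of the bound $T\ge 2q^2-q+3$. Near-equality forces almost all the weight of $T$ onto classes of size exactly $2q+2$. Such a class meets exactly $(q+1)^2$ lines, each in exactly two points, and its pairs are arranged as a complete bipartite collinearity pattern $K_{q+1,q+1}$. Concretely, it consists of two sets of $q+1$ points such that every point of one set is collinear with every point of the other. In a generalized quadrangle this means the class is a grid: the points of two disjoint lines together with the points collinear to both. The lines of these two "sides" must then also be charged. So I would check that such grid classes cannot be disjoint while covering all lines, and that the slack of $2/(q+1)$ cannot absorb the resulting extra cost; this forces $T\ge 2q^2-q+4$.

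\textbf{Exact values.} For $q=3$, items 1--2 give $21\le\overline\chi\le 27-9+6-3=21$. For $q=2$ the lower bound is $8-4+2=6$. For the upper bound I would use the Sylvester model: points are the $15$ pairs from $\{1,\dots,6\}$, and lines are the $15$ synthemes. With $7$ colours we have $T=8$, and the counting above is too weak here (it only gives $T\ge 7$). So I would finish by a case analysis on the class-size profiles with $\sum t_i=8$, exploiting the symmetry of $S_6$, and show that some syntheme is always rainbow.
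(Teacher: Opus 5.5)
Your lower bounds are the paper's. Your charging/Mantel count is correct and is a tidier route than the paper's component and $E^*$ bookkeeping. But it only proves $\overline{\chi}(\mathcal{Q}(4,q))\le q^3-q^2+2q-2$. The stated bound $q^3-q^2+2q-3$ in items 1--2, and through it item 4, rests on the ``last unit'', which you leave as an unexecuted plan. That step is the real content of the paper's proof: the case $m=q-1$ via Lemma~\ref{Lem:CotaB_q-1}, plus the separate analysis in Theorem~\ref{teo:q=3}.

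Your plan also aims at the wrong configuration. A class of size $2q+2$ carrying $(q+1)^2$ charged lines meets every line through its points in exactly two points, so it contains no line. Its two sides are sets of $q+1$ pairwise non-collinear points, not ``two disjoint lines plus common neighbours''. What your equality analysis actually gives, for $q\ge 3$ and $T=2q^2-q+3$, is the following.
\begin{enumerate}
\item The total slack is $2(q+1)/(2q+1)$, and every class with $2\le s\le 2q+1$ costs more than that.
\item So all non-singleton classes have size $2q+2+j_i$, each costing at least $j_i(q+1)/(2(2q+1))$.
\item Then $T=n(2q+1)+\sum j_i$ with $\sum j_i\le 4$ forces $n=q-1$, $\sum j_i=4$, and tightness everywhere.
\item Hence $|B|=2q^2+2$, $B$ meets every line in exactly two points, and each class meets every line in $0$ or $2$ points.
\end{enumerate}

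The missing idea is to exclude such a class of size $2q+2$ or $2q+3$. Its collinearity graph is then $K_{q+1,q+1}$, respectively $(q+1)$-regular triangle-free on $2q+3$ vertices. In both cases a point outside $B$ lying on two of its lines would create a triangle. So its $s(q+1)/2$ lines carry $(q-1)s(q+1)/2>(q-1)(q^2+1)=|\mathcal{P}\setminus B|$ distinct outside points, which is impossible. Pigeonhole over $q-1$ classes with total excess $4$ then finishes the argument only for $q\ge 4$. For $q=3$, two classes of size $10$ survive all of this and need a further argument, which your plan does not anticipate. So item 4 is not established either.

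You are right that item 2's upper bound cannot hold at $q=2$; the paper's Theorem~\ref{upperboundthm} assumes $q\ge 4$. However, your claim that the count is too weak at $q=2$ is an arithmetic slip. There $(q^2+1)(2q+1)/(q+1)=25/3$, so $T\ge 9$ and $k\le 15-9=6$. Equivalently, $q^3-q^2+2q-2=6$ at $q=2$, so item 3 follows from your own inequality with no Sylvester case analysis.
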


\section{Lower bounds for the upper chromatic number of generalized quadrangles of order $q$}\label{lowerbounds}

In this section, we give two different lower bounds for the upper chromatic number of generalized quadrangles of order $q$, depending on the order of the $2$-blocking sets. 
In fact, we have the following remark using the results enumerated in the introduction and given in \cite{daisyPP,2ovoid,Cossi,D2000}.

\begin{remark}\label{2blocking}
 Depending on the value of $q$ we have two different bounds for the $2$-blocking set number:
\begin{enumerate}
\item For $q\geq 5$ an odd prime power, we do not know if a $2$-ovoid exists. But, we have a $2$-blocking set given in \cite{fmo23} of cardinality $2q^2+q$. Consequently, an upper bound for the $2$-blocking set number is $2q^2+q$.
\item If $q$ is an even prime power or $q=3$, the generalized quadrangles have {\it{$2$-ovoids}} of cardinality $2(q^2+1)$. The {\it{$2$-ovoids}} are $2$-blocking sets that intersect all the lines in exactly two points (see \cite{2ovoid}). Then, $2(q^2+1)$ is an upper bound for the $2$-blocking set number.
\end{enumerate}
\end{remark}

Appliying the Remark \ref{2blocking} to the equation (\ref{eq:natural}) we have the following theorem: 

\begin{theorem}\label{lowerboundsthm} Let $\mathcal{Q}(4,q)$ be a generalized quadrangle of order $q$. Then we have that:
\begin{enumerate}
    \item If $q\geq 5$ and odd prime power, then:
$$q^3-q^2+2\le \overline{\chi}(\mathcal{Q}(4,q))$$

\item If $q$ is an even prime power or $q=3$, we have that: 
$$q^3-q^2+q\le\overline{\chi}(\mathcal{Q}(4,q))$$
\end{enumerate}
\end{theorem}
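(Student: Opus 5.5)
The plan is to apply inequality (\ref{eq:natural2}): take an explicit $2$-blocking set $\mathcal{X}$, give all its points one colour, and give every other point its own colour. We then only need to check that this colouring is rainbow-free and count the colours.

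\textbf{Rainbow-freeness.} Let $\mathcal{X}$ be any $2$-blocking set of $\mathcal{Q}(4,q)$. Colour every point of $\mathcal{X}$ with colour $0$. Colour the remaining $|\mathcal{P}|-|\mathcal{X}|$ points with pairwise distinct colours $1,2,\ldots$. Every line $\ell\in\mathcal{L}$ contains at least two points of $\mathcal{X}$, so two points of $\ell$ share colour $0$ and $\ell$ is not rainbow. The colouring is surjective onto exactly $|\mathcal{P}|-|\mathcal{X}|+1$ colours, hence $\overline{\chi}(\mathcal{Q}(4,q))\ge |\mathcal{P}|-|\mathcal{X}|+1$. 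We also use $|\mathcal{P}|=q^3+q^2+q+1$.

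\textbf{Case 1: $q\ge 5$ odd.} Take $\mathcal{X}=B=O'_1\cup S$ from the Daisy structure of Theorem \ref{Teo:Daisy2}. The one point to verify is that $B$ is a $2$-blocking set, which follows from the description in \cite{fmo23}. A stem $T^*_k$ contains the $q\ge 2$ points of $T_k\subseteq S$. Any other line meets each petal in exactly one point and one stem in exactly one point. The center lies on every stem, so a line through the center would be a stem. Hence a non-stem line avoids the center, and it meets $O'_1$ in one point and $S$ in one further point, which is distinct since $\mathcal{D}$ is a partition. Thus every line has at least two points of $B$. Since $|B|=q^2+q(q+1)=2q^2+q$, the bound is
\[
q^3+q^2+q+1-(2q^2+q)+1 \;=\; q^3-q^2+2 .
\]

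\textbf{Case 2: $q$ even or $q=3$.} Take $\mathcal{X}$ to be a $2$-ovoid, which exists by \cite{Cossi,D2000} (Remark \ref{2blocking}), with $|\mathcal{X}|=2(q^2+1)$. This gives
\[
q^3+q^2+q+1-2q^2-2+1 \;=\; q^3-q^2+q .
\]

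The only nontrivial step is confirming that the Daisy set $B$ blocks every line twice, in particular that a non-stem line's point in $O'_1$ is not the removed center. The argument above handles this using the fact that all stems pass through the center. The rest is arithmetic.
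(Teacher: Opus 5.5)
Your proposal is correct and is essentially the paper's proof. Both color a $2$-blocking set with one color and all remaining points with distinct colors, using the Daisy set $O'_1\cup S$ of size $2q^2+q$ for odd $q\ge 5$ and a $2$-ovoid of size $2(q^2+1)$ when $q$ is even or $q=3$, with the same arithmetic against $|\mathcal{P}|=q^3+q^2+q+1$; your explicit check that $O'_1\cup S$ meets every line at least twice (the $q+1$ stems are exactly the $q+1$ lines through the center, so a non-stem line meets $O_1$ away from the center) fills in a detail the paper only asserts from the cited description.
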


The proof is straightforward, using the two values given in the Remark \ref{2blocking} and the fact that the number of points of $\mathcal{Q}(4,q)$ is $q^3+q^2+q+1$.

\section{Upper bound for the upper chromatic number of generalized quadrangles of order $q\ge4$}
\label{upperbound}

In this section we give a general upper bound for $\mathcal{Q}(4,q))$ for $q\geq 3$

\begin{theorem}\label{upperboundthm} Let $q\geq 4$ be a prime power. Then 
$$\overline{\chi}(\mathcal{Q}(4,q))\le q^3-q^2+2q-3.$$
\end{theorem}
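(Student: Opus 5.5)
Fix a rainbow-free $k$-coloring $c$ of $\mathcal{Q}(4,q)$ with chromatic classes $C_0,\dots,C_{k-1}$. The plan is a double count over lines: each line $\ell$ must contain two points of the same color, and this "uses up" points, so $k$ cannot be much larger than $|\mathcal{P}|$ minus the number of points forced to repeat. Write $N=|\mathcal{P}|=(q+1)(q^2+1)$ and define the \emph{excess} $e=\sum_i(|C_i|-1)=N-k$. The target $k\le q^3-q^2+2q-3$ is equivalent to
\[ e\ \ge\ 2q^2-q+4 .\]
So it suffices to show that the non-singleton classes must together contain many points.

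The first step separates the trivial classes. Let $U$ be the union of the classes of size at least $2$, so $e=|U|-t$, where $t$ is the number of non-singleton classes. Every line contains two points of one color, and both lie in $U$. Hence $U$ is a $2$-blocking set, and by Lemma \ref{blockingset} we get $|U|\ge 2(q^2+1)$. This alone does not suffice, because $t$ could be large. We therefore need a refined count that charges each line to a \emph{monochromatic pair on it}. For a class $C$ of size $s$, the lines containing at least two points of $C$ number at most $\binom{s}{2}$, since two points lie on at most one line. They also number at most $s(q+1)/2$, since each such line absorbs at least two incidences from $C$. As every line must be covered, we obtain
\[ \sum_i \min\Bigl(\tbinom{|C_i|}{2},\tfrac{|C_i|(q+1)}{2}\Bigr)\ \ge\ q^3+q^2+q+1 .\]
The quantity $f(s)=\min(\binom{s}{2},s(q+1)/2)$ is at most $(s-1)\cdot\frac{s}{2}$, and for $s\ge q+2$ it is at most $(s-1)\frac{q+1}{2}\cdot\frac{s}{s-1}$. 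So each unit of excess covers at most about $\frac{q+1}{2}$ lines, and only when the class is large, of size near $q+2$ or more. Small classes are inefficient: a pair covers one line per unit of excess, and a class of size $3$ covers $3/2$.

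The second step optimizes this ratio. We bound $f(s)/(s-1)\le \frac{q+1}{2}\cdot\frac{s}{s-1}$, which is largest near $s=q+2$, giving roughly $\frac{(q+1)(q+2)}{2(q+1)}=\frac{q+2}{2}$ per unit of excess. Summing, $e\cdot\frac{q+2}{2}\gtrsim (q+1)(q^2+1)$, so $e\gtrsim 2(q^2+1)\frac{q+1}{q+2}\approx 2q^2-2q+\dots$. This falls slightly short of $2q^2-q+4$. The main obstacle is therefore closing this linear-in-$q$ gap. For that I would use the structure of $\mathcal{Q}(4,q)$ more carefully. A class achieving $\binom{s}{2}$ covered lines with $s\approx q+2$ would need its points pairwise collinear on distinct lines, i.e. a set with no three collinear points whose collinearity graph is complete. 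But $\mathcal{Q}(4,q)$ has no triangles, since the incidence graph has girth $8$, so any pairwise-collinear set lies on a single line. Hence within a class $C$ the collinearity graph is triangle-free apart from points on a common line.

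The third step replaces $\binom{s}{2}$ by a Mantel/Turán-type bound on the number of lines meeting $C$ in at least two points. Two lines through a point of $C$ that each carry another point of $C$ cannot both carry further collinear structure forming triangles. This gives roughly at most $s^2/4+O(s)$ such lines, or $s(q+1)/2$ when $C$ is large. Redoing the optimization with this bound should improve the per-excess efficiency to about $\frac{q+1}{2}\cdot\frac{q+1}{q+1}$, that is, $\frac{q+1}{2}$ lines per unit of excess. This yields $e\ge 2(q^2+1)-O(1)$, and a careful treatment of boundary cases and integrality, using $q\ge4$, should produce exactly $e\ge 2q^2-q+4$. I expect this last sharpening, and verifying that the constant works for all $q\ge4$, to be the delicate part.
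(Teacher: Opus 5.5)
Your double count is sound, and once step three is made exact it is essentially the count the paper performs through the surplus incidences $E^*$ and Lemma \ref{lem:B-E*}. Choosing one pair of points of $C$ on each line that meets $C$ at least twice gives a triangle-free graph on $C$, since three pairwise collinear points would form a triangle of $\mathcal{Q}(4,q)$. So a class of size $s$ accounts for at most $f(s)=\min(\lfloor s^2/4\rfloor,\lfloor s(q+1)/2\rfloor)$ lines, with no $O(s)$ term. The gap is in what this count yields. The ratio $f(s)/(s-1)$ is maximized at $s=2q+2$ (a class whose collinearity graph is $K_{q+1,q+1}$), where it equals $(q+1)^2/(2q+1)>\frac{q+1}{2}$. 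So your efficiency $\frac{q+1}{2}$ and the resulting $e\ge 2(q^2+1)-O(1)$ are wrong. Because $(q^2+1)(2q+1)=(q+1)(2q^2-q+3)-2$, the count gives only $e\ge 2q^2-q+3$, i.e. $\overline{\chi}(\mathcal{Q}(4,q))\le q^3-q^2+2q-2$, which is one more than the statement. No bookkeeping of integrality can improve this. Take $q-1$ classes of sizes $2q+2+j_i$ with $\sum_i j_i=4$, all $j_i$ even if $q$ is even. Then $\sum_i f(|C_i|)=(q+1)(q^2+1)=|\mathcal{L}|$ exactly, with excess $2q^2-q+3$.

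What is missing is a geometric argument that kills this tight case; in the paper this is Lemma \ref{Lem:CotaB_q-1} together with the case $m=q-1$. Put $c=(q+1)^2/(2q+1)$. If $e=2q^2-q+3$, then $c\,e-|\mathcal{L}|=\frac{2(q+1)}{2q+1}$, and each class has loss $c(|C|-1)-f(|C|)\ge 0$. For $q\ge 3$, a class of size between $2$ and $2q+1$ loses more than $\frac{2(q+1)}{2q+1}$, and a class of size $2q+2+j$ loses at least $\frac{j(q+1)}{2(2q+1)}$. This forces exactly the configuration above, with equality everywhere. Every class meets every line in $0$ or $2$ points, and the union $B$ is a $2$-ovoid with $|\mathcal{P}\setminus B|=q^3-q^2+q-1$. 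Since there are $q-1\ge 3$ classes and $\sum_i j_i=4$, some class has size $2q+2$ or $2q+3$.

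\emph{Size $2q+3$.} This would give a $(q+1)$-regular triangle-free graph on an odd number of vertices. That is impossible: the minimum degree $q+1$ exceeds $2(2q+3)/5$, so by the Andr\'asfai--Erd\H{o}s--S\'os theorem the graph is bipartite, and a regular bipartite graph has even order.

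\emph{Size $2q+2$.} This is a dual grid with collinearity graph $K_{q+1,q+1}$. Girth $8$ prevents an outside point from lying on two of its $(q+1)^2$ lines, and these lines meet $B$ only inside the class. Hence they carry $(q+1)^2(q-1)=q^3+q^2-q-1>q^3-q^2+q-1$ distinct points of $\mathcal{P}\setminus B$, a contradiction.

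Such dual grids do exist in $\mathcal{Q}(4,q)$ for $q$ even, so this step genuinely needs the global $2$-ovoid structure, not a property of one class alone. With it added, your argument reaches $e\ge 2q^2-q+4$ and becomes a cleaner per-class version of the paper's component analysis.
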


To prove Theorem \ref{upperboundthm}, we will use the incidence graph of $Q(4,q)$, given in Definition \ref{incidencegraphdef} and Lemma \ref{VMLemma}.  It is a $(q+1,8)$-cage that is a bipartite graph with two partite sets of vertices, the points $\mathcal{P}$ and the lines $\mathcal{L}$ of $\mathcal{Q}(4,q)$. For this reason, throughout this proof, we will say that we have points and lines as the vertices of the incidence graph. Recall also that the incidence graph has diameter 4 and girth 8, and that is the minimum $(q+1)$-graph with these properties. 

Suppose that there is a rainbow-free $c$-coloring of $\mathcal{Q}(4,q)$ that attains the upper chromatic number, that is $\overline{\chi}(\mathcal{Q}(4,q))=c$. 
Notice that we only color the vertices of the bipartite graph that correspond to points, and that the union of all chromatic classes with more than one element induces a $2$-blocking set $B\subset \mathcal{P}$ of ${\mathcal{Q}(4,q)}$.

Let $G = (B, \mathcal{L})$ be the subgraph of the incidence graph of ${\mathcal{Q}(4,q)}$, induced by $B$. Observe that, in this subgraph, if $C$ is a chromatic class of the coloring, then the subgraph $(C, N_G(C))$ is connected; otherwise, we could obtain a rainbow-free coloring of $\mathcal{Q}(4,q)$ with more than c colors. Thus, $C\subset B_i$, where  $G_i=(B_i, \mathcal{L})$ is a connected component of $G$. Let $G_i=(B_i,\mathcal{L}_i)$, for $i\in \{1,\ldots,s\}$ be the connected components of $G=(B,\mathcal{L})$. Since $B$ is a $2$-blocking set, each line $\ell\in\mathcal{L}$ contains at least two points in one of the sets $B_i$, and the rest of their neighbors are also in $B_i$ or in $\mathcal{P}-B$.

To continue, we will analyze the structure of these subgraphs, and we will prove Lemma \ref{Lem:CotaB}, Lemma \ref{Lem:CotaB_q-1}, and Lemma \ref{lem:B-E*}, to finalize with the proof of Theorem \ref{upperboundthm}.

Notice that if a line is either covered or extra-covered by some set $B_i$, it is also covered or extra-covered by $B$.  

If we have extra-covered lines, then we delete edges until each line is covered by some  $B_i$. 
We denote such a set of deleted edges by $E^*$. Note that $E^*$ is not unique and given a set $E^*$, we define $G^*=(B,\mathcal{L})-E^*$. 

First, we determine some bounds on the cardinality of a $2$-blocking set of the lines and its relation with the structure of $G^*$. 

\begin{lemma}\label{Lem:CotaB}
    Let $B$ be a 2-blocking set. If $G=(B,\mathcal{L})$ and $|B|=2(q^2+1)$ then:
    \begin{enumerate}
    \item $d_G(p)=q+1$ for every vertex $p\in B$ and $d_G(\ell)=2$ for every vertex $\ell \in \mathcal{L}$,
        \item $E^*=\emptyset$, and 
    \item For every connected component $G_s=(B_s,\mathcal{L}_s)$, $B_s$ has at least $2q+2$ points.
    \end{enumerate}
\end{lemma}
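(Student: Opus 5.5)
Parts 1 and 2 come from the double counting in Lemma \ref{blockingset}, pushed to equality, and part 3 uses the girth of the incidence graph.

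\emph{Part 1.} Each point of $B$ has degree $q+1$ in the incidence graph, and all its neighbours are lines, so $d_G(p)=q+1$ for every $p\in B$. This gives $|E(G)|=|B|(q+1)=2(q^2+1)(q+1)=2(q^3+q^2+q+1)$. Counting the same edges from the line side gives $|E(G)|=\sum_{\ell\in\mathcal{L}} d_G(\ell)$. Since $B$ is $2$-blocking, $d_G(\ell)\ge 2$ for every line, and there are exactly $q^3+q^2+q+1$ lines. The sum therefore equals its minimum possible value, which forces $d_G(\ell)=2$ for every $\ell$.

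\emph{Part 2.} By part 1 no line is extra-covered. No edges need deleting, so $E^*=\emptyset$.

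\emph{Part 3.} Fix a component $G_s=(B_s,\mathcal{L}_s)$ and pick $p\in B_s$.
\begin{itemize}
\item By part 1, $p$ lies on $q+1$ lines of $G$, and each of these is in $\mathcal{L}_s$.
\item Each such line $\ell_i$ has exactly one further neighbour $p_i\in B$, again by part 1, and $p_i\in B_s$.
\item The points $p_1,\dots,p_{q+1}$ are pairwise distinct. If $p_i=p_j$ with $i\ne j$, the points $p,p_i$ would lie on two lines $\ell_i,\ell_j$, giving a $4$-cycle in the incidence graph. This contradicts girth $8$ (Lemma \ref{VMLemma}).
\end{itemize}
So far this gives only $q+2$ points in $B_s$.

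To reach $2q+2$, take a single $p_1$. It has $q$ further lines $m_1,\dots,m_q$ in $G$, each with one more point $r_j\in B_s$. These $r_j$ are distinct from one another by the same girth argument. Each $r_j$ differs from $p$, since otherwise $p$ and $p_1$ would share two lines. Each $r_j$ differs from every $p_i$: if $r_j=p_i$, then $p,\ell_1,p_1,m_j,p_i,\ell_i,p$ is a $6$-cycle, which girth $8$ forbids. Counting $p$, the $q+1$ points $p_i$ and the $q$ points $r_j$ gives at least $1+(q+1)+q=2q+2$ points in $B_s$.

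The main care point is the distinctness checks via girth $8$. The count above is in fact an undercount: repeating the step at every $p_i$ shows the ball of radius $4$ around $p$ in $G_s$ is a tree, so the bound holds with room to spare.
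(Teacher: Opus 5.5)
Your proof is correct and follows the paper's argument essentially step for step. The edge count $|B|(q+1)=2|\mathcal{L}|$, together with $d_G(\ell)\ge 2$, forces $d_G(\ell)=2$ and hence $E^*=\emptyset$. Then $p$, its $q+1$ partners $p_i$, and the $q$ further partners of a single $p_i$ are shown to be distinct by ruling out $4$- and $6$-cycles, which gives $2q+2$.

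Drop your closing remark, though. Girth $8$ still allows $8$-cycles, so the radius-$4$ ball around $p$ in $G_s$ need not be a tree. For example, if $G_s$ is a subdivided $K_{q+1,q+1}$, every $p_i$ leads to the same $q$ points and your count $2q+2$ is exact. Nothing in the proof depends on that remark.
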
   
    
\begin{proof}
    Suppose that $|B|=2(q^2+1)$. Since $2(q^3+q^2+q+1)=(2q^2+2)(q+1)$ and by definition, $d_G(p)=q+1$ for every vertex $p\in B$, we conclude that $d_G(\ell)=2$ (any line $\ell$ is covered by $B$). And, we also conclude that $E^*=\emptyset$. 
        
    Now, suppose that $G$ is not connected, and let $G_s=(B_s,\mathcal{L}_s)$. 
    For any point $p\in B_s$, since $N(p)\subset \mathcal{L}_s$ and $d_G(p)=q+1$, it follows that $N(p)=\{\ell_0,\ell_1,\dots,\ell_q\}\subset \mathcal{L}_s$. Each line $\ell_i\in \mathcal{L}_s$ satisfies that $d_{{G}_s}(\ell_i)=2$; then, let $p_i\in B_s\cap N(\ell_i)\setminus p$, for each line $\ell_i$. Clearly, as $G$ has no 4-cycles, $p_i\neq p_j$ for $i\neq j$. 
    
    Moreover, for an arbitrary but fixed integer $1\le\alpha\le q$, take one of these points $p_{\alpha}$ and take all its neighbors in $\mathcal{L}_s$ different of $\ell_{\alpha}$, that is $N(p_{\alpha})\setminus{\ell_{\alpha}}=\{\ell^{\alpha}_1,\ell^{\alpha}_2,\dots,\ell^{\alpha}_q\}$. Each line $\ell^{\alpha}_j$ must have at least another point in $B_s$. 
    In consequence, we obtain $q$ points more $\{p^{\alpha}_1,p^{\alpha}_2,\dots,p^{\alpha}_q\}\in B_s$ such that $p^{\alpha}_j\in N(\ell^{\alpha}_j)\setminus p_{\alpha}$. Since $G$ has no $6$-cycles, these points also must be different for each $\ell^{\alpha}_j$ and also different from $\{p,p_0,p_1,\dots,p_q\}$.
     
     Thus, $\{p,p_0,p_1,\dots,p_q,p^{\alpha}_1,p^{\alpha}_2,\dots,p^{\alpha}_q\}\subset B_i$ and  $|B_i|\ge 1+q+1+q=2(q+1)$.
\end{proof}

We use the following definition throughout the proof of Theorem \ref{upperboundthm}. 

\begin{defn}\label{def:projection}
Let $G$ be a bipartite graph with partite sets $U$ and $V$. The \emph{$U$-partite projection of $G$}, denoted as $G_U$, is a graph whose vertex set is $U$ and two distinct vertices $u_1, u_2 \in U$ are adjacent in $G_U$ if and only if $N(u_1)\cap N(u_2)\neq\emptyset$.     
\end{defn}

As an observation, it is easy to see that there exist different bipartite graphs with the same $U$-projection. Using the Definition \ref{def:projection} and the results given in Lemma \ref{Lem:CotaB}, we have the following remark.

\begin{remark}\label{Rem:B}
    Let $q\ge 3$ be a prime power, and let $G=(B,\mathcal{L})$ be the bipartite graph with $N(B)=\mathcal{L}$ such that every vertex $\ell\in \mathcal{L}$ covered by $B$. Then, if $H$ is the $B$-projection of $G$, each edge of $H$ corresponds to a line in $\mathcal{L}$ and $G$ can be obtained by subdividing each edge of $H$. 
    Notice that, Lemma \ref{Lem:CotaB} implies that this happens when $|B|=2(q^2+1)$. 
 
\end{remark}
{In what follows, we repeatedly use the $B$-projection of $G$ and Remark~\ref{Rem:B}.}

\begin{lemma}\label{Lem:CotaB_q-1}
  If $B$ is a 2-blocking set of order $2(q^2+1)$ such that $G=(B,\mathcal{L})$ has $q-1$ connected components, then $|B_i|\ge2q+4$ for each component $G_i=(B_i,\mathcal{L}_i)$.
\end{lemma}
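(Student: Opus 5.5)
The plan is to work entirely in the $B$-projection. Since $|B|=2(q^2+1)$, Lemma~\ref{Lem:CotaB} gives $d_G(p)=q+1$ for every $p\in B$, $d_G(\ell)=2$ for every line $\ell$, and $E^*=\emptyset$. By Remark~\ref{Rem:B}, each component $G_i$ is then the subdivision of its projection $H_i$ onto $B_i$. So $H_i$ is a simple $(q+1)$-regular graph on $|B_i|$ vertices. A cycle of length $k$ in $H_i$ gives a cycle of length $2k$ in the incidence graph, which has girth $8$. Hence $H_i$ has girth at least $4$; in particular it is triangle-free. This already recovers $|B_i|\ge 2q+2$, the bound of Lemma~\ref{Lem:CotaB}, so the task is to exclude $|B_i|=2q+2$ and $|B_i|=2q+3$.

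\emph{Excluding $2q+3$ and identifying the case $2q+2$.} I will use the Andr\'asfai--Erd\H{o}s--S\'os theorem: a triangle-free graph on $n$ vertices with minimum degree $>2n/5$ is bipartite. If $n\in\{2q+2,2q+3\}$, then $q+1>2n/5$ holds for all $q\ge 2$, so $H_i$ is bipartite. A regular bipartite graph has equal sides, so $n$ is even and $n=2q+3$ is impossible. When $n=2q+2$, both sides have $q+1$ vertices and every vertex has degree $q+1$, so $H_i\cong K_{q+1,q+1}$. Geometrically, $B_i=X\cup Y$ with $|X|=|Y|=q+1$, each of $X$ and $Y$ pairwise non-collinear, and every $x\in X$ collinear with every $y\in Y$. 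The $(q+1)^2$ lines $xy$ are exactly the lines of $\mathcal{L}_i$, and each meets $B$ only in $\{x,y\}$.

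\emph{Counting the grid components.} Let $k$ be the number of components with $|B_i|=2q+2$; by the above, every other component has at least $2q+4$ points. From $\sum_i|B_i|=2q^2+2$ over $q-1$ components,
$$k(2q+2)+(q-1-k)(2q+4)\le 2q^2+2,$$
which gives $k\ge q-3$. So almost all components are grids. The same count shows that the few non-grid components must be small, with total excess over $2q+2$ equal to $4$.

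\emph{Main obstacle.} The real work is to show that no component can be a grid $K_{q+1,q+1}$, using global structure rather than girth alone. Such grids do exist inside $\mathcal{Q}(4,q)$, coming from regular pairs $\{X,Y\}$ with $Y=X^{\perp}$, so a purely local argument cannot work. My first attempt will exploit that the $q+1$ points of $X$ are pairwise non-collinear with common perp $Y$. Every point off $X\cup Y$ that lies on a line $xy$ is outside $B$. I then count, for a point $z\notin B$ on such a line, how the remaining $q$ lines through $z$ must meet $B$ in two points each, all lying in other components. Combined with $k\ge q-3$, meaning the other components are themselves mostly grids partitioning $B$, this should force too many points of $B$ onto the lines through $z$ or produce a forbidden short cycle, giving the contradiction. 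If this fails, the fallback is to compare the number of lines disjoint from all grid components, which are forced into the few non-grid components, with the at most $(2q+6)(q+1)/2$ lines those components can cover.
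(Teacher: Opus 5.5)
Your reduction is correct. With $E^*=\emptyset$ and every line meeting $B$ in exactly two points, each projection $H_i$ is a triangle-free $(q+1)$-regular graph. The Andr\'asfai--Erd\H{o}s--S\'os theorem then does rule out $|B_i|=2q+3$, and it forces $H_i\cong K_{q+1,q+1}$ when $|B_i|=2q+2$. This is more careful than the paper's own $2q+3$ case, which asserts that each outside point lies on at most one line of $\mathcal{L}_i$ without the diameter-two structure that justifies it.

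However, excluding the grid is the entire content of the lemma, and there you offer only plans, neither of which works as stated. The fallback is vacuous: each component $B_j$ covers exactly $|B_j|(q+1)/2$ lines and every line is covered exactly once. So comparing the lines missed by the grid components with the lines the other components can cover is an identity, and it yields no contradiction. The first plan also stalls: the $q$ further lines through a point $z\notin B$ on a grid line each carry exactly two points of $B$ from other components, which is locally consistent. The count $k\ge q-3$ plays no role.

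The missing idea is a single global count, which is what the paper does. Let $B_i=X\cup Y$ be a grid component. Two lines of $\mathcal{L}_i$ through a common point of $B_i$ meet only there. Now suppose $x_1y_1$ and $x_2y_2$ with $x_1\ne x_2$, $y_1\ne y_2$ met in a point $z$, which is necessarily outside $B$. Then $z,\ x_1y_1,\ x_1,\ x_1y_2,\ y_2,\ x_2y_2,\ z$ would be a $6$-cycle in the incidence graph, contradicting girth $8$. Hence the $q-1$ points outside $B$ on each of the $(q+1)^2$ lines of $\mathcal{L}_i$ are pairwise distinct over all these lines, so
\[
(q+1)^2(q-1)\;\le\;|\mathcal{P}\setminus B|\;=\;(q^3+q^2+q+1)-(2q^2+2)\;=\;(q^2+1)(q-1),
\]
which is false for $q\ge 2$. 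Equivalently, the lines of a grid already cover all $(q+1)(q^2+1)$ points, leaving no room for the other components.

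You were right that girth alone cannot exclude the grid; the global input needed is just $|\mathcal{P}\setminus B|$. The paper writes the set as $R=\mathcal{P}-B_i$, but the count only works with points outside $B$: with $\mathcal{P}\setminus B_i$ the inequality becomes an equality.
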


\begin{proof}
     By Lemma \ref{Lem:CotaB}, $B_i$ has at least $2q+2$ points and $E^*=\emptyset$.
     
     If $G$ has $q-1$ components, for a contradiction, assume that there exists a component $G_i$ with $|B_i| = 2q + 2$. 
    Since every point in $B_i$ has degree $q+1$ in the incidence graph, and every line in $L_i$ is covered by $B_i$ (it has degree 2 in $G_i$), counting the edges between $B_i$ and $\mathcal{L}_i$, we obtain $2|L_i| = |B_i|(q+1) = (2q+2)(q+1) = 2(q+1)^2$. Thus, $|L_i| = (q+1)^2 = q^2 + 2q + 1$.
    
Let $R=\mathcal{P}-B_i$ and consider the bipartite graph $H = (R, \mathcal{L}_i)$ induced by the set of points $R$ and the lines in $\mathcal{L}_i$. 
Each line $\ell \in \mathcal{L}_i$ is covering by $B_i$ (that is $N_{G_i}(\ell)=2$). Since every line in $\mathcal{Q}(4,q)$ has $q+1$ points, the remaining $q-1$ points must be in $R$. Therefore, $d_H(\ell) = q - 1$ for every line $\ell \in \mathcal{L}_i$. Then we have that:  
\begin{equation}\label{eqL}
 \sum_{l \in L_i} d_H(l) = |L_i|(q - 1) = (q^2 + 2q + 1)(q - 1) = q^3 + q^2 - q - 1.
\end{equation}
Consider the bipartite projection $G_{B_i}$ (Definition \ref{def:projection}). The incidence graph of $\mathcal{Q}(4,q)$ has girth 8, so $G_{i_{B_i}}$ has no 3-cycles. The maximum number of edges in a triangle-free graph of order $|B_i|=2q+2$ is equal to $(q+1)^2$, and it is obtained by the complete bipartite graph $K_{q+1,q+1}$. Thus, $G_{i_{B_i}}\cong K_{q+1,q+1}$ and the distance between any two lines in $G_i$ is at most 4. 

Since $\mathcal{Q}(4,q)$ has girth 8, the degree in $H$ of any point $p' \in R$ is at most 1 (i.e., $d_H(p') \le 1$). This implies that the sum of the degrees of the points in $R$ is bounded by the total number of points in $R$:
\begin{equation}\label{eqP}
    \sum_{p' \in R} d_H(p') \le |R|  = q^3 - q^2 + q - 1. 
\end{equation}
Since $H$ is a bipartite graph, the sum of the degrees of the vertices in both partitions must be equal:
$\sum_{p' \in R} d_H(p') = \sum_{\ell \in \mathcal{L}_i} d_H(\ell)$. But then Equations \ref{eqL} and \ref{eqP} lead to a contradiction.

Analogously, if $|B_i|=2q+3$, since $G_i$ is bipartite,  $$\sum_{\ell\in\mathcal{L}_i} d_{G_i}(\ell)= \sum_{p\in B_i} d_{G_i}(p) \mbox{, and } |\mathcal{L}_i|=\sum_{p\in B_i} d_{G_i}(p)/2.$$ 
It follows that $|\mathcal{L}_i|=(2q+3)(q+1)/2$.
As previously, since $\mathcal{Q}(4,q)$ has girth 8, the degree in $H$ of any point $p' \in R$ is at most 1 (i.e., $d_H(p') \le 1$),
$$ \sum_{p' \in R} d_H(p') \le |R|  = q^3 - q^2 + q - 1,\text{ and }
\sum_{l \in L_i} d_H(l) = (q - 1)(2q+3)(q+1)/2.$$
This contradicts $ \sum_{p' \in R} d_H(p') = \sum_{\ell \in \mathcal{L}_i} d_H(\ell) $.
Thus, if $q\ge3$, every $B_i$ has order at least $2q+4$.
\end{proof}

In the following we will analyze the cases where $E^*\neq \emptyset$, that is, when {$|B|>2(q^2+1)$. Let $G^*_i=(B_i,\mathcal{L}_i)-E^*$ be a connected component of $G^*=(B,\mathcal{L})-E^*$.}
We define $E^*_i=E(G_i)\cap E^*.$

\begin{lemma}\label{lem:B-E*}
    Let $B$ be a $2$-blocking set of $Q(4,q)$ and consider a set $E^*$. 
    Let $G^*_i=(B_i,{\mathcal{L}}_i)$ be a component of $G^*=(B,\mathcal{L})-E^*$.
        \begin{enumerate}
        \item If $|B_i|=2q-2\alpha$ and $\alpha\ge0$, then $|E^*_i|\ge 2q$.
        \item If $|B_i|=2q-2\alpha-1$ and $\alpha\ge0$, then $|E^*_i|\ge 3q-1$.
       \item If $|B_i|=2q+1$, then $|E^*_i|= q+1$.
     \end{enumerate}
\end{lemma}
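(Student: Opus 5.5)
The plan is a double count of the edges of the incidence graph at the points of $B_i$, combined with Mantel's theorem applied to the $B_i$-projection of $G^*_i$ (Definition~\ref{def:projection}, Remark~\ref{Rem:B}). Write $n=|B_i|$.

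\textbf{The two counts.}
\begin{itemize}
\item Every point has exactly $q+1$ lines, and $N(B)=\mathcal{L}$. So the original graph $G=(B,\mathcal{L})$ has exactly $(q+1)n$ edges incident with $B_i$.
\item After deleting $E^*$, every line of $G^*_i$ is covered, i.e.\ has degree exactly $2$ in $G^*_i$. So $G^*_i$ has $2|\mathcal{L}_i|$ edges.
\item A line that meets $B_i$ but is covered by another component loses all its edges to $B_i$. Hence every edge at $B_i$ that is not in $G^*_i$ lies in $E^*_i$, and
$$|E^*_i|\ \ge\ (q+1)n-2|\mathcal{L}_i|.$$
\end{itemize}

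\textbf{Bounding $|\mathcal{L}_i|$.} Let $H_i$ be the $B_i$-projection of $G^*_i$. By Remark~\ref{Rem:B}, $G^*_i$ is a subdivision of $H_i$, so $|E(H_i)|=|\mathcal{L}_i|$.
\begin{itemize}
\item $H_i$ is simple: two lines sharing two points would give a $4$-cycle in the incidence graph.
\item $H_i$ is triangle-free: a triangle would come from three distinct lines and give a $6$-cycle. This uses girth $8$ (Lemma~\ref{VMLemma}).
\end{itemize}
By Mantel's theorem, $|\mathcal{L}_i|\le \lfloor n^2/4\rfloor$. Therefore
$$|E^*_i|\ \ge\ f(n):=(q+1)n-2\lfloor n^2/4\rfloor .$$
Also $n\ge 2$, since a component contains a covered line.

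\textbf{Evaluating $f$ in each case.}
\begin{itemize}
\item For even $n=2q-2\alpha$ we get $f(n)=n(q+1-n/2)=(2q-2\alpha)(\alpha+1)$. This is concave in $\alpha$ on the admissible range $0\le\alpha\le q-1$. At both endpoints ($n=2q$ and $n=2$) it equals $2q$, which gives item~1.
\item For odd $n=2q-2\alpha-1$ we get $f(n)=(q+1)n-(n^2-1)/2$, again concave. Here $n=1$ is excluded, because a single point cannot cover a line. The extreme admissible values $n=2q-1$ and $n=3$ both give $3q-1$, which gives item~2.
\item For $n=2q+1$, the same computation gives $(q+1)(2q+1)-(4q^2+4q)/2=q+1$, so $|E^*_i|\ge q+1$.
\end{itemize}

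\textbf{Main obstacle: equality in item~3.} The reverse inequality $|E^*_i|\le q+1$ does not follow from Mantel; the counting above only yields the lower bound. I expect this to be the hardest step.

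My first attempt is to obtain a matching lower bound $|\mathcal{L}_i|\ge q(q+1)$ from the extremality of the configuration. I would mimic the argument of Lemma~\ref{Lem:CotaB_q-1}: look at the points of $R=\mathcal{P}-B_i$ on the lines of $\mathcal{L}_i$ and use girth $8$ to control how many lines of $\mathcal{L}_i$ each such point can meet.

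If that fails, I would read $E^*$ as chosen minimal, deleting only surplus edges. Then $|E^*_i|$ equals exactly $(q+1)n-2|\mathcal{L}_i|$, and I would argue that $H_i$ must be the Turán graph $K_{q,q+1}$. Any missing edge of $H_i$ would leave some point of $B_i$ with a line carrying no second point of $B_i$. For $n=2q+1$ close to the maximum, such a line would be forced to be covered outside the component, and I would aim to show this contradicts connectivity or the choice of $E^*$.
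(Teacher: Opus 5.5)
\textbf{Items 1, 2 and the lower bound in item 3.} Here your argument is the paper's argument. You count the $(q+1)|B_i|$ edges at $B_i$, identify the lines of $G^*_i$ with the edges of the $B_i$-projection, and apply Mantel, since girth $8$ makes the projection triangle-free.

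Your version is cleaner in two places:
\begin{itemize}
\item In Case~1 the correct value is $(2q-2\alpha)(\alpha+1)$, minimized at the endpoints. The paper writes $2q(q-\alpha)$, which is wrong; at $\alpha=0$ it gives $2q^2$ instead of $2q$.
\item You are right that $|B_i|=1$ must be excluded from Case~2. At $\alpha=q-1$ the paper's own bound $3q-1+2\alpha(q-\alpha-2)$ drops to $q+1$.
\end{itemize}
That exclusion is a hypothesis, not a consequence. With $B=\mathcal{P}$ you may put all $q+1$ edges at one point into $E^*$. This leaves an isolated point of $G^*$ with $|E^*_i|=q+1<3q-1$. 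So state explicitly that components of $G^*$ are assumed nontrivial; the proof of Theorem~\ref{upperboundthm} assumes the same.

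\textbf{Equality in item 3.} Do not pursue either of your two attempts, because the reverse inequality is false. The paper's proof also stops at $|E^*_i|\ge q+1$, and only this lower bound is used later.

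Every edge of $G$ at $B_i$ lies either in $G^*_i$ or in $E^*$, so $|E^*_i|=(q+1)|B_i|-2|\mathcal{L}_i|$ exactly. For $|B_i|=2q+1$, equality would therefore force the projection to be $K_{q,q+1}$, and nothing forces that. Lines through points of $B_i$ may legitimately keep their two points outside $B_i$; this is exactly where your second attempt breaks down.

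A concrete counterexample for $q=2$, in Sylvester's model with $B=\mathcal{P}$:
\begin{itemize}
\item Take the pentagon $12,34,15,23,45$ with lines $\{12,34,56\},\{34,15,26\},\{15,23,46\},\{23,45,16\},\{45,12,36\}$. On each of these lines, delete the edge to the point outside the pentagon.
\item On the third line through each pentagon point (e.g.\ $\{12,35,46\}$), delete the edge to the pentagon point.
\item On each of the remaining five lines, delete one arbitrary edge.
\end{itemize}
Every line then has exactly two points left. The pentagon with its five lines is a component of $G^*$ with $|B_i|=5=2q+1$, yet $|E^*_i|=15-10=5\neq q+1$. So item 3 should read $|E^*_i|\ge q+1$, and with that correction your proof is complete.
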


\begin{proof}
    For each case, let $G^*_i=(B_i,\mathcal{L}_i)$ be a component of $G^*=(B,\mathcal{L})-E^*$. Let $G_{i_{B_i}}^*$ be the $B_i$-projection of $G_i^*$. Since each line of $G_i^*$ has degree 2, there exists a bijection between the lines of $G_i^*$ and the edges of $G_{i_{B_i}}^*$. Since $\mathcal{Q}(4,q)$ has girth 8, it follows that $G_{i_{B_i}}^*$ has no triangles. Recall that the maximum number of edges in a triangle-free graph is attained by the (quasi-)balanced bipartite graph. 

    \begin{itemize}
    
    \item {\bf{Case 1.}} Let $|B_i|=2q-2\alpha>0$ with $0\leq \alpha<q$. 
    
    Since $|V(H)|=|B_i|$, it follows that $|E(H)|\le(q-\alpha)^2$, $\sum_{p\in B_i} d(p)=(q+1)(2q-2\alpha)$ and $\sum_{\ell\in \mathcal{L}_i} d(\ell)=2(q-\alpha)^2$. Thus,
    $$|E^*|\ge(q+1)(2q-2\alpha)-2(q-\alpha)^2=2q(q-\alpha)\ge2q.$$ 
    
    \item {\bf{Case 2.}} Let $|B_i|=2q-2\alpha-1\ge1$ and $0\le\alpha\le q-1$. 
    
    Since $|V(H)|=|B_i|$, it follows that $|E(H)|\le(q-\alpha)(q-\alpha-1)$, $\sum_{p\in B_i} d_{G_i}(p)=(q+1)(2q-2\alpha-1)$. Thus, 
    $$|E^*|\ge(q+1)(2q-2\alpha-1)-2(q-\alpha)(q-\alpha-1)=3q+2\alpha(q-\alpha-2)-1\ge 3q-1.$$
    
    \item {\bf{Case 3.}} If $|B_i|=2q+1$.
    
    Since $|V(H)|=|B_i|$, it follows that $|E(H)|\le q(q+1)$ and $\sum_{p\in B_i} d(p)=(q+1)(2q+1)$. Thus,
     $|E^*|\ge(q+1)(2q+1)-2q(q+1)=q+1.$
     \end{itemize}
\end{proof}
Now, we are ready to {prove} Theorem \ref{upperboundthm}.

{\bf{ Proof of Theorem \ref{upperboundthm}: }}

\begin{proof}

{Consider a} rainbow-free coloring that attains the upper chromatic {number}. {The set  of the non singular chromatic classes of $B$} is  a  $2$-blocking set. {The} induced subgraph $G = (B, \mathcal{L})$  of the incidence graph of ${\mathcal{Q}(4,q)}$ {has}  the property that, if $B_i$ is a chromatic class of the coloring, then the subgraph $(B_i, N_G(B_i))$ is connected; otherwise, we could obtain a rainbow-free coloring of $\mathcal{Q}(4,q)$ with more colors. 
{As we described in the first part of this section, we can choose a set of edges $E^*$ such that $G^*=G-E^*$
has the property that each connected component $G^*_i=(B_i, \mathcal{L}_i)$ of $G_i^*$ covers its lines (each line has exactly two neighbors in $B_i$)}. Recall that the set of edges $E^*$ may not be unique, and that we defined $E^*_i=E(G_i)\cap E^*.$

By Lemma \ref {blockingset}, any $2$-blocking set satisfies that $|B| \ge 2(q^2 + 1)$. Since $|\mathcal{P}| = q^3 + q^2 + q + 1$ and $|B| \ge 2q^2 + 2$, the set $|R| \leq q^3 - q^2 + q - 1,$ where $R$ is the set of singleton chromatic  classes.

In $G^*$, every component is nontrivial and contains only points of the same color; consequently, if $m$ is the number of connected components of $G^*$, we have that the total number of colors of $\mathcal{P}$ is $m + |R|$.
{For the sake of contradiction, assume that $\overline{\chi}(\mathcal{Q}(4,q))\geq q^3 -q^2 +2q-2 $. By the definition of $\overline{\chi}$, 
if every coloring with $c$ colors has a rainbow line, then every $c'$-coloring, with $c'\geq c$, also has a rainbow line, because we can identify chromatic classes in the $c'$-coloring, obtaining a $c$-coloring and a rainbow line in a $c$-coloring. Clearly this line is a rainbow line in the original $c'$-coloring}. Thus, it suffices to prove that in a coloring with $q^3 -q^2 +2q-2 $ colors, the union of the non-singular chromatic classes does not contain a 2-blocking set. Assume that it does contain a 2-blocking set $B$. By Lemma \ref {blockingset}, any $2$-blocking set satisfies that $|B| \ge 2(q^2 + 1)$. Since $|\mathcal{P}| = q^3 + q^2 + q + 1$ and $|B| \ge 2q^2 + 2$, the set $|R| \leq q^3 - q^2 + q - 1,$ where $R$ is the set of singleton chromatic classes.
Thus,  {$|R| + m =c = q^3 - q^2 +2q -2 $.} Since $|R| \le q^3 - q^2 + q - 1$, it follows that $m \ge q - 1$, that is $G^*$ has at least $q-1$ components. 

We analyze the possible values for $m$, recall that $|R| = q^3 - q^2 + 2q - 2 - m$ and $|B| = q^3+q^2 +q+ 1-|R|$.

\begin{itemize}
\item {\bf{Case $\mathbf{m = q - 1}$:}}
In this case, $|R| = q^3 - q^2 + q - 1$,  $|B| = 2q^2 + 2$ and $E^* =\emptyset$.

Since $|B|=2q^2+2$ and $G{^*}$ has {exactly}  $q-1$ components, {by Lemma \ref{Lem:CotaB_q-1},} every $B_i$ has order at least $2q+4$.
Suppose that every component $G_i$ satisfies $|B_i|\geq 2q+4$.
{Since there are $q-1$ components, this would imply $2q^2+2=\sum_i |B_i| \geq (2q+4)(q-1).$
However, for $q\geq 4$ we have $(2q+4)(q-1)=2q^2+2q-4>2q^2+2$. Hence, some component $G_i$ satisfies $|B_i|<2q+4$, 
a contradiction.}  

\item {\bf{Case $\mathbf{m = q}$:}}
It follows that $|R| = q^3 - q^2 + q - 2$,   $|B| = 2q^2 + 3$, and $|E^*| = |B|(q+1) - 2|\mathcal{L}| = (2q^2+3)(q+1) - 2(q^3+q^2+q+1) = q + 1$.
The average order of a component is $\frac{2q^2+3}{q} = 2q + \frac{3}{q}$, and thus, there exists at least one component $G_\alpha$ of order at most $2q$. By Lemma \ref{lem:B-E*}, such a component requires that $|E^*_\alpha|\ge2q$. Therefore, $|E^*| \ge 2q$, contradicting  $|E^*| = q + 1$.

\item {\bf{Case $\mathbf{m \ge q + 1}$:}}.
We have $|R| {=} q^3 - q^2 + 2q - 2 - m$, $|B| {=} 2q^2 - q + 3 + m$, and $|E^*| {=} (2q^2 - q + 3 + m)(q+1) - 2(q^3+q^2+q+1) = (q+1)(m - q + 1)$.

We consider the order of the components in $G^*$:

If $G^*$ has at least $m - (q-1)$ components of order at most $2q$, then by Lemma \ref{lem:B-E*}, $|E^*| \ge 2q(m - q + 1) > (q+1)(m - q + 1)$ for $q \ge 3$, which is a contradiction.

If $G^*$ has at least $m - (q-1)$ components of order at most $2q+1$, by {Lemma \ref{lem:B-E*}}, it must have exactly $m - (q-1)$ components of order $2q+1$, and the remaining $m - (m - (q-1)) = q - 1$ components must be of order at least $2q+2$. 
In this case, $|B| \ge (q-1)(2q+2) + (m - q + 1)(2q+1) = 2mq + m + q - 1$. 
Since $m \ge q + 1$, we have $|B| \ge 2q^2 + 5q + m - 1 > 2q^2 - q + 3 + m$, a contradiction.
 
Thus, $G^*$ must have at least $q$ components of order at least $2q+2$. In this case, 
    $|B| \ge q(2q+2) + (m - q) = 2q^2 + m + q > 2q^2 - q + 3 + m$ for $q \ge {2}$, which is a contradiction.
    
\end{itemize}
Since all possible cases for the number of components lead to a contradiction, we conclude that $\overline{\chi}(\mathcal{Q}(4,q)) \leq q^3 - q^2  +2q -3.$
\end{proof}

{Combining the lower bound established in Theorem \ref{lowerboundsthm} with the upper bound given in Theorem \ref{upperboundthm}, we obtain the following result.}

\begin{theorem}\label{cor:cotas}
If $q$ is an even prime power or $q=3$, then
$$q^3-q^2+q\le\overline{\chi}(\mathcal{Q}(4,q))\le  q^3-q^2+2q-3.$$
\end{theorem}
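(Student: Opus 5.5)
The plan is to combine the two results already established, and to check carefully which values of $q$ each one covers. For the lower bound I invoke Theorem~\ref{lowerboundsthm}, part 2, directly. Its proof uses Remark~\ref{2blocking}: when $q$ is even or $q=3$ there is a $2$-ovoid $\mathcal{X}$ with $|\mathcal{X}|=2(q^2+1)$. Coloring $\mathcal{X}$ with one color and every other point with its own color gives no rainbow line, since every line meets $\mathcal{X}$ in two points. Inequality (\ref{eq:natural2}) then yields
\[
\overline{\chi}(\mathcal{Q}(4,q))\ge (q^3+q^2+q+1)-2(q^2+1)+1=q^3-q^2+q .
\]

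For the upper bound with $q$ an even prime power and $q\ge 4$, I apply Theorem~\ref{upperboundthm} verbatim, which gives $\overline{\chi}(\mathcal{Q}(4,q))\le q^3-q^2+2q-3$. This settles every even $q\ge4$.

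The delicate part is the small values of $q$, where Theorem~\ref{upperboundthm} does not apply.

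For $q=2$ the two bounds would read $6\le\overline{\chi}\le 5$, which is impossible. So the statement can only hold in the range $q\ge 3$. I would read ``even prime power'' as $q\ge4$ here and refer $q=2$ to the exact value $\overline{\chi}(\mathcal{Q}(4,2))=6$ proved separately in Section~\ref{q2y3}.

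For $q=3$ both bounds equal $21$, so the claim amounts to $\overline{\chi}(\mathcal{Q}(4,3))=21$. The lower bound is already in hand, so I would rerun the proof of Theorem~\ref{upperboundthm} with $q=3$.

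1. The cases $m=q$ and $m\ge q+1$ only used $q\ge3$, so they go through unchanged.
2. The only failing step is the case $m=q-1=2$. There $|B|=20$ and $E^*=\emptyset$, and by Lemma~\ref{Lem:CotaB_q-1} each of the two components has at least $2q+4=10$ points. Hence both components have exactly $10$ points.
3. To rule this out, I would count lines as in Lemma~\ref{Lem:CotaB_q-1}. Each component $G_i$ has $|\mathcal{L}_i|=10\cdot4/2=20$ lines, each covered exactly twice. The triangle-free projection $G_{i_{B_i}}$ then has $10$ vertices and $20$ edges and is $4$-regular.
4. Girth $8$ of the incidence graph means this projection has girth at least $6$, hence no $4$-cycles either. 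A $4$-regular graph of girth at least $5$ needs at least $1+4+12=17$ vertices by the Moore bound, which contradicts having only $10$ vertices.

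This final case, $q=3$ with $m=q-1$, is the main obstacle. If the girth argument did not close it, I would fall back on the direct proof of $\overline{\chi}(\mathcal{Q}(4,3))=21$ from Section~\ref{q2y3}.
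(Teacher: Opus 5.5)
Your lower bound and the case of even $q\ge 4$ match the paper, which proves this statement by combining Theorem \ref{lowerboundsthm} with Theorem \ref{upperboundthm}. You are also right that the statement is false at $q=2$: it would assert $6\le\overline{\chi}(\mathcal{Q}(4,2))\le 5$, contradicting Theorem \ref{thm:Q42}. The paper's one-line combination overlooks that Theorem \ref{upperboundthm} requires $q\ge 4$. At $q=3$ the upper bound $21$ actually comes from Theorem \ref{teo:q=3}.

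Your own treatment of $q=3$, however, contains a false step: step 4.

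\textbf{Step 4 fails.} Each line of $\mathcal{L}_i$ has exactly two points in $B_i$, so a $k$-cycle in the $B_i$-projection corresponds to a $2k$-cycle in the incidence graph. Girth $8$ therefore excludes only triangles. A $4$-cycle comes from an $8$-cycle, i.e.\ an ordinary quadrilateral of the quadrangle, and these are allowed. In fact, by the same Moore bound you invoke, every $4$-regular triangle-free graph on $10$ vertices contains $4$-cycles; $K_{5,5}$ minus a perfect matching is one example. So your contradiction comes entirely from the false premise, and two components of order $10$ are not excluded.

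\textbf{The missing ingredient is the $20$ points of $R=\mathcal{P}\setminus B$.}
\begin{enumerate}
\item Each line of $\mathcal{L}_1$ carries $q-1=2$ of these points, so $\sum_{p\in R}d_{\mathcal{L}_1}(p)=40$.
\item Two lines of $\mathcal{L}_1$ through a common $p\in R$ must be disjoint and non-adjacent as edges of the projection; otherwise you get a $4$- or $6$-cycle.
\item Distinct $p$ give distinct pairs, so there are at least $\sum_{p\in R}\binom{d_{\mathcal{L}_1}(p)}{2}\ge 20$ such pairs.
\item For an edge $ab$ of a $4$-regular triangle-free graph on $10$ vertices, $N(a)\cup N(b)$ already has $8$ vertices. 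Hence each edge has at most one such partner, giving at most $10$ pairs, a contradiction.
\end{enumerate}
This is essentially the argument in the paper's proof of Theorem \ref{teo:q=3}.

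\textbf{Step 1 is also inaccurate.} For $m=q=3$ the average component order $2q+3/q$ equals $2q+1=7$. So you cannot conclude that some component has order at most $2q$. You must add that if no component has order at most $2q$, all three have order exactly $7$, and Lemma \ref{lem:B-E*} then gives $|E^*|\ge 3(q+1)=12>4$. The case $m\ge q+1$ does go through at $q=3$.

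With these two repairs, or simply by citing Theorem \ref{teo:q=3} as in your fallback, the case $q=3$ is settled.
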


\section{Generalized  Quadrangles of order $2$ and $3$}
\label{q2y3}

In this section, we determine the upper chromatic number of the classical generalized quadrangle $\mathcal{Q}(4,q)$, for $q=2$ and {for} $q=3$. 

We {start} with {the case} $q=3$, because we use similar techniques as the ones used in the proof of Theorem \ref{upperboundthm}.

Finally, we prove the Theorem {\ref{mainthm}} for $q=2$. Although the techniques and arguments used to prove Theorem \ref{upperboundthm} could readily be applied again, we provide an alternative proof that we find both instructive and elegant. This proof makes use of Sylvester’s description of the generalized quadrangle of order $2$, which is based on a decomposition of the edges of $K_6$ into $1$-factors. We believe that presenting this decomposition and the associated proof is both natural and worthwhile, particularly since it constituted the starting point of the present work several years ago.

\begin{theorem}\label{teo:q=3}
    $ \overline{\chi}(\mathcal{Q}(4,3))=21$.
\end{theorem}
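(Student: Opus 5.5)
For $q=3$ we have $|\mathcal{P}|=40$, and Theorem \ref{lowerboundsthm} (using a $2$-ovoid of size $2(q^2+1)=20$) already gives $\overline{\chi}(\mathcal{Q}(4,3))\ge q^3-q^2+q=21$. So the work is the upper bound $\overline{\chi}(\mathcal{Q}(4,3))\le 21$. Theorem \ref{upperboundthm} only gives $q^3-q^2+2q-3=24$, and its Case $m=q-1$ fails for $q=3$ because $(2q+4)(q-1)=20=2q^2+2$. So I will redo the argument of Theorem \ref{upperboundthm} with $q=3$ and sharpen each case.

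Suppose there is a rainbow-free coloring with $c\ge 22$ colors; by the merging argument it suffices to treat $c=22$. As before, let $B$ be the union of the non-singleton classes, $R$ the singletons, and $m$ the number of components of $G^*=G-E^*$. Then $|R|+m=22$, $|B|=40-|R|=18+m$, and $|B|\ge 20$ by Lemma \ref{blockingset}. Hence $m\ge 2$. Counting incidences gives $|E^*|=4|B|-2\cdot 40=4(m-2)$. Lemma \ref{lem:B-E*} with $q=3$ gives the following costs for a component:
\begin{itemize}
\item order $\le 6$ and even: $|E^*_i|\ge 6$;
\item order $\le 5$ and odd: $|E^*_i|\ge 8$;
\item order $7$: $|E^*_i|=4$.
\end{itemize}
Components of order $\ge 8$ are free.

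I will then run through the cases.
\begin{itemize}
\item \textbf{$m=2$.} Here $|B|=20$ and $E^*=\emptyset$, so Lemma \ref{Lem:CotaB_q-1} applies ($q-1=2$ components). Each component has order $\ge 10$, so both have order exactly $10$. I must rule this out directly. Each component $B_i$ is a set of $10$ points in which every point lies on $4$ lines, each meeting $B_i$ in exactly $2$ points. So $B_i$ spans $20$ lines, and its projection is a triangle-free $4$-regular graph on $10$ vertices of girth $\ge 4$. The lines not touched by $B_i$ are exactly the lines of the other component. The plan is to show that such a $B_i$ would itself be a $2$-blocking set for its $20$ lines and a $0$-set for the remaining $20$, i.e. the $2$-ovoid splits into two halves each meeting every line in $0$ or $2$ points. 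I would then derive a contradiction by a counting argument in $R$, analogous to Lemma \ref{Lem:CotaB_q-1}: a point of $R$ lies on $4$ lines, and the count of lines of $\mathcal{L}_i$ through points of $R$ gives $20\cdot 2=40$ incidences for $20$ points of $R$, so each point of $R$ meets exactly $2$ lines of each $\mathcal{L}_i$. I would then use the $(4,8)$-cage structure (e.g. point-pair distances, or that no point outside $B_i$ sees $B_i$ in a pattern forcing a short cycle) to get the contradiction.
\item \textbf{$m=3$.} Here $|B|=21$ and $|E^*|=4$. The average order is $7$, so some component has order $\le 7$. Orders $\le 6$ cost $\ge 6>4$, so all three components must have order $7$, costing $12>4$. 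Contradiction.
\item \textbf{$m\ge 4$.} Here $|E^*|=4(m-2)$ and $|B|=18+m$. Let $a$ be the number of components of order $\le 6$ and $b$ the number of order $7$. Then $6a+4b\le 4(m-2)$, and the remaining $m-a-b$ components have order $\ge 8$, so $18+m\ge 8(m-a-b)+7b+a$. Combining these with $a+b\le m$ should give a contradiction, as in the last case of the proof of Theorem \ref{upperboundthm}. A brief linear check confirms it: from the second inequality, $7a+b\ge 7m-18$; from the first, $b\le m-2-1.5a$; so $5.5a\ge 6m-16$, giving $a\ge (6m-16)/5.5$. This together with $6a\le 4m-8$ fails for $m\ge 4$.
\end{itemize}

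The main obstacle is the case $m=2$, where the pure counting of Lemma \ref{Lem:CotaB_q-1} is exactly tight. There I would need a genuine structural fact about $\mathcal{Q}(4,3)$: that a $2$-ovoid cannot split into two halves, each of which is a "$\{0,2\}$-set" on $10$ points. My first attempt would be to show that such a $10$-set would be a $4$-regular triangle-free configuration whose projection is $K_{5,5}$ minus a perfect matching (or the Petersen complement), and then use that $\mathcal{Q}(4,3)$ is self-dual-free / has no such subquadrangle of order $(1,3)$. Failing that, I would fall back on a computer check or an explicit model of $\mathcal{Q}(4,3)$.
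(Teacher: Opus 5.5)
\textbf{Gap: the case $m=2$ is not proved.} Your reduction, the case $m=3$, and the linear check for $m\ge 4$ are correct and essentially follow the paper. The gap is the case $m=2$, which you rightly identify as the only tight one. It carries the whole content of the theorem, and your proposal does not settle it.

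\begin{itemize}
\item The step ``$40$ incidences for $20$ points of $R$, so each point of $R$ meets exactly $2$ lines of each $\mathcal{L}_i$'' only gives an average. The paper first proves $d_{\mathcal{L}_i}(p)\le 2$ for $p\in R$: three lines of $\mathcal{L}_i$ through $p$ would carry six points of $B_i$, each on three further lines of $\mathcal{L}_i$, all distinct because there are no triangles, giving $3+18=21>20$ lines. Only then does it get equality.
\item ``Use the $(4,8)$-cage structure to get the contradiction'' is not yet an argument.
\item The fallback does not work as stated. The complement of the Petersen graph is $6$-regular and contains triangles. $K_{5,5}$ minus a perfect matching is not the only triangle-free $4$-regular graph on $10$ vertices; the circulant on $\mathbb{Z}_{10}$ with $i\sim i\pm2,\,i\pm3$ is another, non-bipartite one. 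A subquadrangle of order $(1,3)$ has only $8$ points, so its non-existence excludes $8$-point components, not $10$-point ones.
\end{itemize}

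\textbf{How to close it.} The missing ingredient is one more girth-$8$ count.

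The paper's route: once each $p\in R$ lies on exactly two lines of $\mathcal{L}_1$, the $20$ points of $R$ give $20$ distinct pairs of lines of $\mathcal{L}_1$ meeting in $R$. So some $\ell\in\mathcal{L}_1$ lies in two such pairs, with $\ell_1$ and $\ell_2$. Since $\ell,\ell_j$ meet outside $B_1$ and the incidence graph has no $4$- or $6$-cycles, $\ell_j$ is at distance at least $6$ from $\ell$ in $G_1$. Hence the at least $3$ points of $B_1$ on $\ell_1\cup\ell_2$ avoid the $8$ points of $B_1$ within distance $3$ of $\ell$, forcing $|B_1|\ge 11$.

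A shorter count fits your setup directly.
\begin{itemize}
\item For $(a,b)\in B_1\times B_2$ no line meets both, so $a,b$ are non-collinear and have exactly $q+1=4$ common collinear points. All of these lie in $R$, since a point of $B$ collinear with $a$ lies in $B_1$ and one collinear with $b$ lies in $B_2$. This gives $4|B_1||B_2|=400$ triples.
\item A point $p\in R$ lies on $t_p$ lines of $\mathcal{L}_1$ and $4-t_p$ lines of $\mathcal{L}_2$. So it is collinear with $2t_p$ points of $B_1$ and $2(4-t_p)$ points of $B_2$, and serves at most $4t_p(4-t_p)\le 16$ such pairs.
\item That allows at most $20\cdot 16=320<400$ triples, a contradiction.
\end{itemize}
This count only needs $|B_1||B_2|>80$, which already follows from the bound $|B_i|\ge 2q+2=8$ in Lemma~\ref{Lem:CotaB}.
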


\begin{proof}
    By Lemma \ref{lowerboundsthm}, $ \overline{\chi}(\mathcal{Q}(4,3))\ge21$.
    For the upper bound, suppose that there exists a $22$-coloring rainbow-free coloring of $\mathcal{Q}(4,q)$.
    Notice that, if we take all the chromatic classes with more than one element, we have a set $B\subset \mathcal{P}$ that is a $2$-blocking set of ${\mathcal{Q}(4,q)}$.
    By Lemma \ref{blockingset}, $|B|\ge20$. 
    If $G=(B,\mathcal{L})$ is connected, then $G$ uses one color, and since there are at most 20 singular classes in $\mathcal{P}$, the coloring uses 21 colors, a contradiction.
    
    Thus, $G$ is not connected. Since each component $G_i$ uses a different color, if $G$ has $m\ge2$ components, then $20\le|B|=m+18$. Let $R=\mathcal{P}\setminus B$.

    If $m=2$, then $|B|=20$. Let $B=B_1\cup B_2$, by Lemma \ref{Lem:CotaB_q-1}, $|B_i|\ge10$.
    Thus $|B_1|=|B_2|=10$ and by Lemma \ref{Lem:CotaB} and a double counting argument $|\mathcal{L}_1|=|\mathcal{L}_2|=10$. In what follows, we analyze the maximum degree of a vertex in $R$ with respect to the set of lines $\mathcal{L}_i$. Suppose there is a vertex $p\in R$ such that $d_{\mathcal{L}_i}(p)\ge3$, and let $\ell_1,\ell_2,\ell_3\in N_{\mathcal{L}_i}(p)$, since the girth of $\mathcal{Q}(4,3)$ is 8, it follows that the $dist(\ell_j,\ell_k)\ge6$   for $j,k\in\{1,2,3\}$, and thus $N_{B_i}(\ell_j)\cap N_{B_i}(\ell_k)\setminus\{p\}=\emptyset$, else $(p,\ell_j,p',\ell_k,p)$ is a 4-cycle, with $p'\in N_{B_i}(\ell_j)\cap N_{B_i}(\ell_k)\setminus\{p\}$. 
    Let $N_{B_i}(\ell_1)=\{p_1,p_2\}$, $N_{B_i}(\ell_2)=\{p_3,p_4\}$, $N_{B_i}(\ell_3)=\{p_5,p_6\}$. Since each point in $B_i$ has degree 4 and the girth of $\mathcal{Q}(4,3)$ is 8, then $N_{\mathcal{L}_i}(p_j)\cap N_{\mathcal{L}_i}(p_k)\setminus\{\ell_1,\ell_2,\ell_3\}=\emptyset$, else $(p,\ell_j,p_{j'},\ell',p_{k'},\ell_k,p)$ is a 6-cycle, with $p_{j'}\in N_{B_i}(\ell_j)\setminus\{p\}$, $p_{k'}\in N_{B_i}(\ell_k)\setminus\{p\}$, and $\ell'\in N_{\mathcal{L}_i}(p_{j'})\cap N_{\mathcal{L}_i}(p_{k'})\setminus\{\ell_1,\ell_2,\ell_3\}$. 
    Therefore, each point $\{p_1,p_2,\dots,p_6\}$ has at least another 3 neighbors, and thus, there are at least another $3*6=18$ lines in $\mathcal{L}_i$ different from $\ell_1,\ell_2,\ell_3$, contradicting that $|\mathcal{L}_i|=20$. Thus, for each point $p\in R$, $d_{\mathcal{L}_i}(p)=2$.

    Each vertex $p\in R$ has exactly two neighbors in each $\mathcal{L}_i$, and since $\mathcal{Q}(4,3)$ has girth 8, these two vertices are at distance at least 6, thus in each $G^*_i$ there are at least 20 pairs of vertices of ${\mathcal{L}_i}$ at distance at least 6 (one pair for each vertex $p\in R$). 
    The vertex-set ${\mathcal{L}_1}$ has order 20 and there are 20 pairs of vertices at distance at least 6, so, there is at least one vertex  $\ell\in\mathcal{L}_1$ with two vertices $\ell_1,\ell_2\in{\mathcal{L}_1}$ at distance 6, that is $dist(\ell,\ell_1),dist(\ell,\ell_2)\ge6$.  
    Consider the vertex $\ell$, by Lemma \ref{Lem:CotaB}, let $\{p_1,p_2\}=N_{G_1}(\ell)$, and by Lemma \ref{Lem:CotaB}, let $N_{G_1}(p_1)=\{\ell_3,\ell_4,\ell_5\}$ and let $N_{G_1}(p_2)=\{\ell_6,\ell_7,\ell_8\}$, since $\mathcal{Q}(4,3)$ has girth 8, $N_{G_1}(p_1)\cap N_{G_1}(p_2)=\emptyset$. For $3\le j\le8$, let $p_j=N_{G_1}(\ell_j)\setminus\{p_1,p_2\}$, and  $p_j\neq p_k$ for $j\neq k$. 
    Now, consider the vertices $\ell_1,\ell_2$, since $dist(\ell,\ell_1),dist(\ell,\ell_2)\ge6$, it follows that $N_{G_1}(\ell_1)\cap\{p_1,p_2,\dots,p_8\}=\emptyset$ and $N_{G_1}(\ell_2)\cap\{p_1,p_2,\dots,p_8\}=\emptyset$. Since $|N_{G_1}(\ell_1)\cup N_{G_1}(\ell_2)|\ge3$, $\{p_1,p_2,\dots,p_8\}\cup N_{G_1}(\ell_1)\cup N_{G_1}(\ell_2)\subset B_1$ and $|B_1|\ge11$, a contradiction.
    
    If $m=3$, then $|B|=21$ and $E^*\neq\emptyset$. Let $G^*=(B,\mathcal{L})-E^*$, since $\sum_{p\in B}d_{G^*}(p)=4(21)$ and $\sum_{\ell\in \mathcal{L}}d_{G^*}(\ell)=2(40)$, it follwos that $|E^*|=4=q+1$. In this case, by Lemma \ref{lem:B-E*}, we have no components of order less than $2q+1=7$, and since $|E^*|=4$, we have at most one component of order 7. 
    Thus $|B|\ge2(8)+7=23$, a contradiction.
    
    For $m\ge4$ connected components, the blocking-set has order $|B| = 18+ m $ and there are $|E^*|=4(m-2)$ edges in $G=(B,\mathcal{L})$ not needed to cover $\mathcal{L}$. By Lemma \ref{lem:B-E*}, we can have at most  $m-2$ connected components of order less than or equal to 7, and each of these components has exactly 4 edges in $E^*$. Thus, each of these components has order 7 and $|B|\ge 7(m-2)+16=7m+2$. In this case, we use at most $38-6m\le 14$ colors, a contradiction.
\end{proof}

As we {mentioned at the beginning} of this section, although the techniques and arguments used to prove Theorem \ref{upperboundthm} could readily be applied again, we will share an alternative proof for $q=2$ that we consider different and elegant. 
To give this proof, we will describe $\mathcal{Q}(4,2)$ with a nice representation given by Sylvester in \cite{sylvester}. 

Let $G$ be the complete graph $K_6$. We will take the set of points of $\mathcal{Q}(4,2)$ as the edges of $G$, and the lines of $\mathcal{Q}(4,2)$ as the $1$-factors of $K_6$. 
With this model, we have exactly $15$ points and $15$ lines and it is not dificult to prove that for any point $p\in \mathcal{P}(\mathcal{Q}(4,2))$ and any line $l\in \mathcal{L}(\mathcal{Q}(4,2))$, that not contians $p$, there exist one and only one line $l'\in \mathcal{L}(\mathcal{Q}(4,2))$ such that $l\cap l'\not = \emptyset$. To see that, notice that if we have an edge $ij\in E(K_6)$ and any $1$-factor $F$ that not contains $ij$, then $F=\{ik,jm,st\}$, and immediately the $1$-factor $F'=\{ij, km, st\}$, satisfyes that $F \cap F'=\{st\} $. 

Now, we can prove the following theorem.

\begin{theorem}\label{thm:Q42}
$\overline{\chi}(\mathcal{Q}(4,2)) = 6$.
\end{theorem}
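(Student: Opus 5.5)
The proof splits into a lower bound $\overline{\chi}(\mathcal{Q}(4,2))\ge 6$, which is already available, and an upper bound $\overline{\chi}(\mathcal{Q}(4,2))\le 6$, which I would prove in the Sylvester model. For the lower bound, Theorem \ref{lowerboundsthm} with $q=2$ gives $q^3-q^2+q=6$. Concretely, a $2$-ovoid of size $10$ gets one colour and the remaining $5$ points get singleton colours. In the Sylvester model, the $5$ edges at a fixed vertex $v$ of $K_6$ form an ovoid (no two lie in a common $1$-factor), so the complementary $10$ edges form a $2$-ovoid. Indeed, every $1$-factor contains exactly one edge at $v$, hence exactly two edges of the $K_5$ on the other vertices.

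For the upper bound, suppose for contradiction that $c$ is a rainbow-free $7$-colouring of the $15$ points, i.e.\ the edges of $K_6$. Let $B$ be the union of the non-singleton classes; $B$ is a $2$-blocking set, so $|B|\ge 10$ by Lemma \ref{blockingset}. With $m$ non-singleton classes the number of colours is $m+15-|B|=7$, so $|B|=8+m$. Since $|B|\ge10$, this forces $m\ge 2$. As in the proof of Theorem \ref{upperboundthm}, we may assume each class induces a connected subgraph of the incidence graph; otherwise we could split it and obtain more colours.

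The first case is $m=2$, so $|B|=10$ and there is no slack: every $1$-factor meets $B$ in exactly two edges. By Lemma \ref{Lem:CotaB} each of the two components $B_1,B_2$ has at least $2q+2=6$ points, which gives $|B_1|+|B_2|\ge 12>10$, a contradiction.

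The second case is $m\ge3$, so $|B|=8+m\le 15$ and $m\le7$. Here I would use the slack $|E^*|=3|B|-30=3(m-2)$ and Lemma \ref{lem:B-E*} with $q=2$. A component of order $2$ or $4$ costs at least $2q=4$ deleted edges; one of order $1$ or $3$ costs at least $3q-1=5$; one of order $5$ costs exactly $q+1=3$; and components of order at least $6$ cost nothing. Distributing $3(m-2)$ among $m$ components, and each class has size at least $2$, so a component of order $1$ is impossible. A cheap bound is not enough: with all components of size at least $2$ we only get $|B|\ge 2m$, compatible with $|B|=8+m$ for $m\le 8$. Cost alone does not immediately contradict either. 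For example, $m=3$ with sizes $(5,6,0)$ is impossible, but sizes such as $(2,2,7)$ cost $8>3$ and $(5,3,3)$ costs $13>3$. I expect a short enumeration over $m\in\{3,\dots,7\}$, comparing total cost with $3(m-2)$ while keeping sizes summing to $8+m$, to kill every case. Since there are $m$ components, at most $m-2$ can be of size at most $5$ in the cheapest way, forcing two components of size at least $6$, i.e.\ $|B|\ge 12+2(m-2)$ plus more, which exceeds $8+m$ once the cheap components are tallied correctly.

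The main obstacle is this final bookkeeping. Lemma \ref{lem:B-E*} part 3 gives only a lower bound for order-$2q+1$ components, so I must double-check that a mix of order-$5$ components (cost $3$ each) and large components cannot fit. The tentative count is $5k+6(m-k)\le 8+m$ with $3k\le 3(m-2)$, so $k\le m-2$, which gives $5m+12-\dots$; this already exceeds $8+m$ for $m\ge3$. If the lemma's bounds prove too crude for $q=2$, I would instead work directly in $K_6$. There I would check which edge sets of size $5$ meet every $1$-factor in at least $2$ edges, using the explicit structure $F=\{ik,jm,st\}$ described above, and show that no two disjoint connected such configurations coexist.
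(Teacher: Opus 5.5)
Your route differs from the paper's. The paper works in Sylvester's model: it takes one duad of each of seven colours and argues that these seven edges of $K_6$ contain a $1$-factor, which would be a rainbow line. That argument is short but incomplete as written. Seven differently coloured edges need not contain a $1$-factor, for instance if all of them lie in one $K_5$. So the real work is in choosing the representatives, and the paper's case analysis skips this. You instead specialize the deficiency count of Section~\ref{upperbound} to $q=2$, which is the route the paper mentions and declines. It needs only that the chosen pairs form a simple triangle-free graph, plus Mantel's bound, and it closes cleanly. The decisive step is already in your text. For $n\le 5$ one has $3n-2\lfloor n^2/4\rfloor\in\{3,4,5,4,3\}$, so every component of order at most $5$ costs at least $3$. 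The total cost is $3|B|-30\le 3(m-2)$, so at least two components have order $\ge 6$. This also covers $m=2$, so you can drop the enumeration, the garbled ``$5k+6(m-k)$'' line and the $K_6$ fallback.

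Two points need tightening. First, choose $E^*$ so that every line keeps two points of the \emph{same colour}. Only then are the components of $G^*$ monochromatic, so that their number $m^*$ satisfies $m^*\ge m$. Second, components of $G^*$ are not colour classes. A point of $B$ that is selected on none of its lines is an isolated component, and it costs exactly $q+1=3$, not $3q-1$; Lemma~\ref{lem:B-E*}(2) is false at $\alpha=q-1$. So your claim that order $1$ is impossible is wrong, but the error is harmless. You only need cost at least $3$ for orders $1$ to $5$, and with $L\ge 2$ large components you get $|B|\ge 6L+(m^*-L)\ge m^*+10\ge m+10>m+8$. Finally, you do not need to assume that classes are connected, and that assumption clashes with fixing exactly $7$ colours. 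Take $c\ge 7$ instead, so that $|B|=15-c+m\le 8+m$.
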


\begin{proof}
Using this description of $\mathcal{Q}(4,2)$, it is not difficult to see that $\mathcal{O}=\{\, e\in E(K_6)\colon v_1\in e \,\}.$ Since no perfect matching in $K_6$ contains two edges of $\mathcal{O}$, the set $\mathcal{O}$ meets every line of $\mathcal{O}$ in exactly one point and has exactly five elements. Hence $\mathcal{O}$ is an ovoid of $\mathcal{Q}(4,2)$. 
Moreover, the rest of the edges of $K_6$, that we call ${\mathcal{O}}_2$, is a $ 2$-ovoid, that is the complement of $\mathcal{O}$. 
$$\mathcal{O}_2=\{\, e\in E(K_6)\colon v_1\notin e \ \}.$$ Observe that ${\mathcal{O}}_2$ are the ten edges of $K_5=K_6-\{v_1\}$.
Notice that any factor of $K_6$ intersects this $K_5$ into two edges.

With this description, or equivalently by applying Remark~\ref{2blocking}, we obtain a $6$-coloring of $\mathcal{Q}(4,2)$ by assigning one color to all edges of $\mathcal{O}_2$, and assigning distinct colors to each edges of $\mathcal{O}$, using colors different from the one assigned to $\mathcal{O}_2$.

For the upper bound, by contradiction, assume that there exists a rainbow–free $7$–coloring of $\mathcal{Q}(4,2)$. Then $K_6$ contains a set $S=\{e_1,\dots,e_7\}$ of seven pairwise differently colored edges, and let $H$ be the subgraph induced by $S$. Since $H$ has seven edges on six vertices, it cannot be acyclic, and we show that any such subgraph forces a heterochromatic matching of size three.

If $H$ contains a $6$–cycle, three pairwise disjoint edges of the cycle already form a heterochromatic matching. If $H$ contains a $5$–cycle, then the seventh edge either connects the cycle to the remaining vertex of $K_6$, producing three disjoint colored edges, or it connects two vertices of the cycle, creating a $4$–cycle; in both situations we can select three pairwise disjoint edges of distinct colors, and therefore obtain a heterochromatic matching. 

Finally, if $H$ contains no cycle of length at least four, then $H$ consists of two triangles. If they share a vertex, the seventh edge necessarily creates a longer cycle, reverting to a previous case; if they are vertex–disjoint, any edge between them together with one edge from each triangle yields a heterochromatic matching.

In all possibilities, we obtain a heterochromatic matching of size three, contradicting the assumption that the coloring is rainbow–free in $\mathcal{Q}(4,2)$. Hence no such $7$–coloring exists, so $
\overline{\chi}(\mathcal{Q}(4,2)) \le 6$.

Then: $\overline{\chi}(\mathcal{Q}(4,2))= 6$.
\end{proof}

\section{Conclusions, open problems and future work} \label{sec.conclandfuture}

As we said in the introduction of this paper, we go a step further related to the question about the upper chromatic number of generalized polygons. It is well known that the generalized triangles \cite{KS20} are the projective planes, and the upper chromatic number for projective planes has been studied by different authors (see \cite{daisyPP,bhs, bt}).

In our opinion, we left interesting open problems: 
\begin{itemize}
\item{\bf{Problem 1:}} Improve the lower and upper bounds of issue (1) in Theorem \ref{mainthm}.
\item{\bf{Problem 2:}} Improve the upper bound of issue (2) in Theorem \ref{mainthm}.
\item{\bf{Problem 3:}} Find the exact value for $q=\{4,5\}$ in Theorem \ref{mainthm}.
\end{itemize}

An interesting direction for future research is to determine lower and upper bounds, as well as exact values, for generalized hexagons of prime power order $q$.


\end{document}